\documentclass[11pt,a4paper]{amsart}

\usepackage{latexsym}
\usepackage{amsmath}
\usepackage{amsthm}
\usepackage{latexsym}
\usepackage{amssymb}
\usepackage{romannum}
\usepackage[a4paper , lmargin = {3cm} , rmargin = {3cm} , tmargin = {2.5cm} , bmargin = {2.5cm} ]{geometry}

\usepackage{hyperref}
\usepackage{tikz}

  	\newcommand{\Z}{\ensuremath{\mathbb{Z}}}
  	\newcommand{\R}{\ensuremath{\mathbb{R}}}   

   	\def\Aut{{\rm{Aut}}$(W_n)$}
   	\def\Spe{{\rm{Spe}}$(W_n)$}
    \def\FA{{\rm{F}}$\mathcal{A}$}

	\def\CAT{{\rm{CAT}}$(0)$}
	
	\def\F{{\rm{F}}}

\theoremstyle{plain}
	
\newtheorem*{NewTheoremA}{Theorem A}
\newtheorem*{NewCorollaryB}{Corollary B}
\newtheorem*{NewTheoremC}{Theorem C}
\newtheorem*{NewTheoremD}{Theorem D}
\newtheorem*{NewCorollaryE}{Corollary E}
\newtheorem*{NewCorollaryF}{Corollary F}

\newtheorem*{FarbF}{Farb's Fixed Point Criterion}

\newtheorem{theorem}{Theorem}[section]
\newtheorem{lemma}[theorem]{Lemma}

\newtheorem{corollary}[theorem]{Corollary}

\newtheorem{proposition}[theorem]{Proposition}
\newtheorem{definition}[theorem]{Definition}

\title{The automorphism group of the universal Coxeter group}
\author{Olga Varghese}
\thanks{Funded by the Deutsche 
Forschungsgemeinschaft (DFG, German Research Foundation) under Germany's 
Excellence Strategy EXC 2044-39068558, Mathematics M\"unster: Dynamics-Geometry-Structure}
\address{Olga Varghese\\
Department of Mathematics\\
M\"unster University\\ 
Einsteinstra\ss e 62\\
48149 M\"unster (Germany)}
\email{olga.varghese@uni-muenster.de}

\begin{document}

\pagenumbering{arabic}
\begin{abstract}
We study fixed point properties of the automorphism group of the universal Coxeter group \Aut. In particular, we prove that whenever \Aut\
acts by isometries on complete $d$-dimensional \CAT\ space with $d<\lfloor\frac{n}{2}\rfloor$, then it must fix a point. We also prove that \Aut\ does not have Kazhdan's property (T). Further, strong restrictions are obtained on homomorphisms of \Aut\ to groups that do not contain  a copy of ${\rm Sym}(n)$. 
\end{abstract}
\maketitle

\section{Introduction}
This article belongs to geometric group theory, a young research field which lies in the intersection of algebra, geometry and topology. Geometric group theory studies the interplay between algebraic and geometric properties of groups. In this article we explore the structure of the automorphism group of the universal Coxeter group of rank $n$, ${\rm Aut}(W_n):={\rm Aut}(\Z_2*\ldots*\Z_2)$, from geometric perspective. For small $n$ we have: ${\rm Aut}(W_1)\cong\left\{{\rm id}\right\}$ and ${\rm Aut}(W_2)\cong W_2$. 
It was proven by M\"uhlherr that for $n\geq 3$ there exists an injective homomorphism $\iota:{\rm Aut}(W_n)\hookrightarrow{\rm Aut}(F_{n-1})$, where $F_{n-1}$ is a free group of rank $n-1$, see  \cite{Muehlherr}. Further, the abelianization map $F_{n-1}\twoheadrightarrow \Z^{n-1}$ gives a natural epimorphism 
${\rm Aut}(F_{n-1})\twoheadrightarrow{\rm GL}_{n-1}(\Z)$. We obtain the following interaction of groups: 
$${\rm Aut}(W_n)\hookrightarrow{\rm Aut}(F_{n-1})\twoheadrightarrow{\rm GL}_{n-1}(\Z).$$
As a special case, we have:
${\rm Out}(W_3)\cong{\rm Out}(F_2)\cong{\rm GL}_2(\Z).$
Many algebraic and geometric properties are known for the groups ${\rm Aut}(F_n)$ and ${\rm GL}_{n-1}(\Z)$, but the research on the structure of the automorphism group of the universal Coxeter group is quite new.

We start to present results concerning fixed point properties of \Aut. A group $G$ is said to have property $\F\mathcal{A}_d$ if any action of $G$
by isometries on complete \CAT\ space of covering dimension $d$ 
has a fixed point. We prove

\begin{NewTheoremA}
If $n\geq 4$ and $d<\left\lfloor\frac{n}{2}\right\rfloor$, then \Aut\ has property $\F\mathcal{A}_{d}$. 
\end{NewTheoremA}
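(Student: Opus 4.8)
The plan is to deduce Theorem~A from Farb's Fixed Point Criterion, which converts the global fixed point problem into a finite combinatorial one: if a group acts by isometries on a complete \CAT\ space of covering dimension $d$ and is generated by a family of subgroups each of which fixes a point, then the whole group fixes a point provided every $d+1$ of these subgroups \emph{jointly} fix a point. This is a Helly-type statement — the fixed-point set of a subgroup is closed and convex, and the Helly number of convex sets in a $d$-dimensional \CAT\ space is $d+1$ — so all the content is carried by the ``$(d+1)$-wise'' condition. My strategy is therefore to exhibit a generating family of \emph{finite} subgroups of \Aut\ for which every subfamily of size at most $\lfloor\frac{n}{2}\rfloor$ generates a finite subgroup. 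Since a finite group acting on a complete \CAT\ space always fixes a point (the circumcenter of any bounded orbit), and since $d+1\leq\lfloor\frac{n}{2}\rfloor$ whenever $d<\lfloor\frac{n}{2}\rfloor$, Farb's criterion then delivers property $\F\mathcal{A}_d$.

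For the generating family I would use the classical fact that \Aut\ is generated by the finite symmetric group \Sym\ of free-factor permutations together with the partial conjugations $\sigma_{ij}\colon s_i\mapsto s_js_is_j$, each of which is an involution because $s_j^2=1$; these partial conjugations generate the special automorphism subgroup \Spe. The task is to organize \Sym\ and the $\sigma_{ij}$ into natural finite subgroups indexed by the $n$ generators of $W_n$, chosen so that their pairwise and higher interaction is governed by a disjointness (matching) pattern on the index set $\{1,\dots,n\}$, and so that any selection supported on at most $\lfloor\frac{n}{2}\rfloor$ indices lands inside a single finite subgroup built from dihedral and symmetric blocks supported on disjoint coordinates.

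The main obstacle, and the exact source of the bound $\lfloor\frac{n}{2}\rfloor$, is this combinatorial finiteness claim: I must determine precisely how many of the chosen subgroups can be combined before an infinite-order automorphism appears. The danger is that conjugations by distinct generators create infinite dihedral subgroups $\langle\mathrm{inn}_{s_i},\mathrm{inn}_{s_j}\rangle\cong\mathbb{Z}_2*\mathbb{Z}_2$ inside $\mathrm{Inn}(W_n)\cong W_n$, which have no global fixed point on a line. The key point to prove is that manufacturing such an infinite-order inner (or outer) automorphism from the generating subgroups requires ``using'' more than half of the $n$ coordinates, so that every combination of at most $\lfloor\frac{n}{2}\rfloor$ of them remains finite; verifying this threshold, together with checking that the chosen subgroups genuinely generate all of \Aut, is the heart of the argument.

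Finally I would attend to the analytic hypotheses: the \CAT\ space need not be proper, so I would invoke Helly's theorem for a \emph{finite} family of closed convex sets, which holds without local compactness, and record that each finite subgroup has nonempty convex fixed-point set, so that the intersection pattern is exactly controlled by the $(d+1)$-wise condition established above.
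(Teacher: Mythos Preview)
Your overall framework is the same as the paper's: Farb's Fixed Point Criterion applied to a finite generating set of involutions, with the Bruhat--Tits circumcenter argument supplying fixed points for finite subgroups. The gap is in the ``heart of the argument'' you defer: the claim that one can choose a generating family of finite subgroups of \Aut\ so that \emph{every} subfamily of size at most $\lfloor n/2\rfloor$ generates a finite group. You do not exhibit such a family, and the heuristic you offer---that producing an infinite-order automorphism requires ``using more than half of the $n$ coordinates''---is false. Already the two involutions $\sigma_{12}$ and $\alpha_{(1,2)}$, supported on just two coordinates, generate an infinite dihedral group: $\alpha_{(1,2)}\sigma_{12}\alpha_{(1,2)}=\sigma_{21}$ and $\sigma_{12}\sigma_{21}$ is inner conjugation by $s_1s_2$, which has infinite order in $W_n$. (This is exactly the $\infty$-labelled edge in the Coxeter presentation of Corollary~\ref{epi}.) So with any natural generating set the finiteness threshold is $2$, not $\lfloor n/2\rfloor$, and your plan of ``landing inside a single finite subgroup built from dihedral and symmetric blocks'' cannot work as stated.

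The paper does \emph{not} attempt to prove that small subsets of the generating set $Y=\{\sigma_{12},\alpha_{(i,i+1)}\}$ generate finite groups. Instead it runs an induction on the size $k$ of the subset and, in the one case where $\langle Y'\rangle$ is genuinely infinite (namely $Y'=\{\sigma_{12},\alpha_{(1,2)},\ldots,\alpha_{(k,k+1)}\}$), it invokes a separate geometric lemma (Proposition~\ref{conjugates}): one manufactures $\lfloor n/(k+1)\rfloor$ pairwise-commuting ${\rm Sym}(n)$-conjugates of $Y'$ supported on disjoint blocks of coordinates, and a dimension count $d<k\cdot\lfloor n/(k+1)\rfloor$ forces a common fixed point for $Y'$ from the inductive hypothesis on $k$-element subsets. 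The bound $\lfloor n/2\rfloor$ then emerges from the inequality $\lfloor n/2\rfloor\le k\lfloor n/(k+1)\rfloor$ for $1\le k\le d$, not from any finiteness of $\langle Y'\rangle$. This commuting-conjugates mechanism is the step your proposal is missing.
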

Concerning ${\rm Aut}(F_{n-1})$ and ${\rm GL}_{n-1}(\Z)$ similar results were proved in \cite{Varghese}. Recall that a group $G$ is said to satisfy Serre's property ${\rm F}\mathcal{A}$ if every action, without inversions, of $G$ on a simplicial tree has a fixed point. Serre proved that ${\rm GL}_n(\Z)$ has property $\F\mathcal{A}$ for $n\geq 3$. Regarding ${\rm Aut}(F_n)$ Bogopolski was the first to prove that this group also has property $\F\mathcal{A}$, see \cite{BogopolskiFA}. The next corollary follows immediately from Theorem A.
\begin{NewCorollaryB}
The group \Aut\ has Serre's property \FA\ if and only if  $n=1$ or $n\geq 4$. In particular, \Aut\ is isomorphic to a non-trivial amalgam if and only if $n=2$ or $n=3$.
\end{NewCorollaryB}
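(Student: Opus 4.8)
The plan is to reduce both equivalences to Theorem A together with Serre's structure theory of group actions on trees. First I would record the elementary implication that property $\F\mathcal{A}_1$ implies Serre's property \FA: a simplicial tree, equipped with the path metric in which each edge has length one, is a complete \CAT\ space of covering dimension one, so every action of \Aut\ without inversions on such a tree is in particular an isometric action to which $\F\mathcal{A}_1$ applies. The resulting fixed point lies either at a vertex or in the interior of a unique edge, and in the latter case the absence of inversions forces both endpoints of that edge to be fixed; hence a fixed vertex always exists.

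For the positive direction of the first equivalence, when $n=1$ the group \Aut\ is trivial and fixes every point, so it has \FA\ vacuously; when $n\geq 4$ we have $1<\lfloor\frac{n}{2}\rfloor$, so Theorem A with $d=1$ gives that \Aut\ has property $\F\mathcal{A}_1$, whence Serre's property \FA\ by the reduction above. For the remaining cases I would exhibit actions on trees without global fixed points. The group $\mathrm{Aut}(W_2)\cong W_2\cong\Z_2*\Z_2$ is a nontrivial free product and acts on its Bass--Serre tree without a fixed point. For $n=3$ I would use the epimorphism $\mathrm{Aut}(W_3)\twoheadrightarrow\mathrm{Out}(W_3)\cong{\rm GL}_2(\Z)$ recalled in the introduction: since ${\rm GL}_2(\Z)$ is a nontrivial amalgam it acts on a tree without a global fixed point, and pulling this action back along the epimorphism yields an action of $\mathrm{Aut}(W_3)$ with the same empty fixed-point set. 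Therefore \Aut\ fails \FA\ exactly when $n\in\{2,3\}$, which is the first equivalence.

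Finally I would deduce the statement about amalgams, where the implication from $n\in\{2,3\}$ is the substantive one. For $n=2$ the group $\Z_2*\Z_2$ is visibly a nontrivial amalgam. For $n=3$ I would invoke Serre's characterization: a finitely generated group fails \FA\ if and only if it splits as a nontrivial amalgam or admits a surjection onto $\Z$. Granting that $\mathrm{Aut}(W_3)$ is finitely generated with finite abelianization, the second alternative is excluded, so the failure of \FA\ established above forces a nontrivial amalgam decomposition. The converse is immediate, since a nontrivial amalgam acts on its Bass--Serre tree without a global fixed point and therefore fails \FA; the first equivalence then gives $n\in\{2,3\}$. I expect the finite-abelianization input for $n=3$ to be the main obstacle, and I would verify it directly from a generating set of $\mathrm{Aut}(W_3)$ consisting of the permutations of the free factors together with the partial conjugations $s_i\mapsto s_js_is_j$, checking that each generator maps to a torsion element of the abelianization.
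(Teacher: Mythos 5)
Your proposal is correct and follows the same overall architecture as the paper's proof: the trivial case $n=1$; Theorem A with $d=1$ (via the observation that ${\rm F}\mathcal{A}_1$ implies Serre's property ${\rm F}\mathcal{A}$ for actions without inversions) for $n\geq 4$; explicit failures of ${\rm F}\mathcal{A}$ for $n=2,3$; and Serre's Theorem 15 combined with generation by torsion elements to convert failure of ${\rm F}\mathcal{A}$ into a non-trivial amalgam decomposition. The one genuine difference is the witness for $n=3$: the paper invokes the isomorphism ${\rm Aut}(W_3)\cong{\rm Aut}(F_2)$ (Lemma \ref{Iso}) together with Bogopolski's theorem that ${\rm Aut}(F_2)$ fails property ${\rm F}\mathcal{A}$, whereas you pull back a fixed-point-free tree action along the epimorphism ${\rm Aut}(W_3)\twoheadrightarrow{\rm Out}(W_3)\cong{\rm GL}_2(\Z)$, using the classical splitting of ${\rm GL}_2(\Z)$ as a non-trivial amalgam of two finite dihedral groups; since the image of ${\rm Aut}(W_3)$ in the isometry group of the Bass--Serre tree equals that of ${\rm GL}_2(\Z)$, the global fixed-point sets coincide and are empty. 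Both routes are valid; yours trades the citation of Bogopolski for the amalgam structure of ${\rm GL}_2(\Z)$, which is arguably more self-contained given that the isomorphism ${\rm Out}(W_3)\cong{\rm GL}_2(\Z)$ is already recorded in the introduction, and it produces an explicit tree on which ${\rm Aut}(W_3)$ acts without a fixed point. The remaining ingredient you flag as a possible obstacle --- that ${\rm Aut}(W_3)$ is finitely generated with finite abelianization, so that the surjection-onto-$\Z$ alternative in Serre's theorem is excluded --- is exactly handled by the paper's Proposition \ref{GenAut} (the generating set $Y$ consists of involutions), so there is no gap.
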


If a group has property ${\rm F}\mathcal{A}$ it is interesting to know if this group also has Kazhdan's property (T). For the group \Aut\ we prove
\begin{NewTheoremC}(see Theorem 3.5)
For $n\geq 2$, \Aut\ does not have Kazhdan's property (T).
\end{NewTheoremC}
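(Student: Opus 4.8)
The plan is to use the Delorme--Guichardet theorem: for a countable discrete group, Kazhdan's property (T) is equivalent to property (FH), namely that every affine isometric action on a real Hilbert space has a fixed point. Since a $1$-cocycle is bounded exactly when its affine action has a fixed point, it suffices to exhibit an orthogonal representation $\pi$ of $\mathrm{Aut}(W_n)$ together with an \emph{unbounded} cocycle $b\colon \mathrm{Aut}(W_n)\to\mathcal H$, i.e. a map with $b(\varphi\psi)=b(\varphi)+\pi(\varphi)b(\psi)$ and unbounded image. Observe first that $\mathrm{Aut}(W_n)$ is infinite for $n\geq 2$, since it contains $\mathrm{Inn}(W_n)\cong W_n$ ($W_n$ being centreless), and $W_n$ is infinite.

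The cocycle should come from the Coxeter/free-product geometry of $W_n$ itself. As the right-angled Coxeter group on the empty graph, $W_n$ has the Haagerup property: its reflections form a wall structure, and the inversion-set cocycle $b_0\colon W_n\to\ell^2(\mathcal R_0)$, $b_0(w)=\mathbb 1_{\{r:\ r\ \text{separates}\ e\ \text{from}\ w\}}$, is proper because $\|b_0(w)\|^2$ is the word length of $w$. The point enabling an extension to the bigger group is that the set $\mathcal R$ of reflections of $W_n$ coincides with the set of nontrivial torsion elements (every finite-order element of a free product of finite groups is conjugate into a factor, and each factor is $\Z_2=\{1,s_i\}$), and this set is manifestly preserved by every automorphism, since automorphisms preserve element orders. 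Hence $\mathrm{Aut}(W_n)$ acts on $\mathcal R$, and so on $\ell^2(\mathcal R)$ via the permutation representation $\pi(\varphi)\delta_r=\delta_{\varphi(r)}$. I would then define $b(\varphi)=\mathbb 1_{D_\varphi}$ through a symmetric-difference rule $D_{\varphi\psi}=D_\varphi\mathbin{\triangle}\varphi(D_\psi)$, arranged so that $b$ is a genuine cocycle restricting to $b_0$ under $\mathrm{Inn}(W_n)\cong W_n$. As $b_0$ is already unbounded, $b$ is unbounded, the affine action has no fixed point, and property (FH) --- hence (T) --- fails for all $n\geq 2$.

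The main obstacle is exactly the construction of the half-space (orientation) data needed to make $\{D_\varphi\}$ a cocycle on all of $\mathrm{Aut}(W_n)$, with each $D_\varphi$ finite so that $b(\varphi)\in\ell^2$. The naive $W_n$-wall structure is defined through reduced words in the standard generators $s_1,\dots,s_n$ and through word length, and neither is preserved by a general automorphism (a partial conjugation $s_i\mapsto s_js_is_j$ changes length), so the half-space decomposition is a priori not $\mathrm{Aut}(W_n)$-equivariant. Resolving this requires a canonical, generating-set-free wall structure; the natural candidate is the one attached to the Grushko decomposition $W_n=\Z_2*\cdots*\Z_2$, on which $\mathrm{Aut}(W_n)$ should act because it permutes the conjugacy classes of the maximal finite subgroups, and one must verify that this yields an $\mathrm{Aut}(W_n)$-equivariant half-space system satisfying the cocycle identity. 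For low rank this can be bypassed: property (T) passes to quotients, so $\mathrm{Aut}(W_2)\cong W_2$ is infinite amenable and $\mathrm{Out}(W_3)\cong\GL_2(\Z)$ is a non-(T) quotient of $\mathrm{Aut}(W_3)$, settling $n=2,3$ directly. These serve as a consistency check, while the cocycle construction is what handles $n\geq 4$, where Theorem~A rules out the easier tree- or amenable-quotient arguments.
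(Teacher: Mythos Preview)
Your proposal has a genuine gap at its core. You correctly observe that $\mathrm{Aut}(W_n)$ permutes the set $\mathcal R$ of reflections (these being exactly the nontrivial torsion elements), so the orthogonal permutation representation on $\ell^2(\mathcal R)$ is available; and the inversion-set cocycle $b_0$ on $W_n\cong\mathrm{Inn}(W_n)$ is indeed unbounded. But the entire content of your argument for $n\ge 4$ is the extension of $b_0$ to a cocycle $b$ on $\mathrm{Aut}(W_n)$, and you do not construct it: you yourself flag this as ``the main obstacle'' and then say only that ``one must verify'' that an $\mathrm{Aut}(W_n)$-equivariant half-space system exists. That verification is never carried out and is far from routine. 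The half-spaces underlying $b_0$ are defined via word length in $s_1,\dots,s_n$, which the partial conjugations $\sigma_{ij}$ do not preserve; even granting some orientation data, you give no reason why the symmetric-difference sets $D_\varphi$ should be \emph{finite} (so that $b(\varphi)\in\ell^2$) for automorphisms outside $\mathrm{Inn}(W_n)$, nor do you check the cocycle identity against the relations in $\mathrm{Aut}(W_n)$. As it stands, nothing is proved for $n\ge 4$.

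The paper's argument shows that your dismissal of elementary quotient methods for $n\ge 4$ was premature. Theorem~A says $\mathrm{Aut}(W_n)$ itself has property $\mathrm{F}\mathcal A$, but property~(T) also passes to finite-index \emph{subgroups}, and those need not have $\mathrm{F}\mathcal A$. The paper uses the finite-index subgroup $\mathrm{Spe}(W_n)$ of automorphisms fixing every conjugacy class of involutions (so $\mathrm{Aut}(W_n)/\mathrm{Spe}(W_n)\cong\mathrm{Sym}(n)$). The kernel of the projection $W_n\twoheadrightarrow W_2$ sending $s_1,s_2$ to themselves and $s_i\mapsto 1$ for $i\ge 3$ is $\mathrm{Spe}(W_n)$-characteristic, yielding a surjection $\mathrm{Spe}(W_n)\twoheadrightarrow\mathrm{Spe}(W_2)\cong W_2$. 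The infinite dihedral group $W_2$ lacks property $\mathrm{F}\mathcal A$, hence lacks~(T) by Watatani; therefore $\mathrm{Spe}(W_n)$ lacks~(T), and so does $\mathrm{Aut}(W_n)$. This is both shorter and complete, and it is exactly the ``tree-quotient'' manoeuvre you ruled out---just applied one finite-index step down.
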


We say that a finitely generated group $G$ is a \CAT\ group if $G$ acts properly, cocompactly by isometries on a \CAT\ metric space. It is known that ${\rm Aut}(W_2)$ and ${\rm Aut}(W_3)\cong{\rm Aut}(F_2)$ are \CAT\ groups, see Lemma 2.3 and \cite{Piggott}. For $n\geq 4$, we conjecture that ${\rm Aut}(W_n)$ is a \CAT\ group.

Our results concerning linear and free representations of \Aut\ rely on the following fact.
\begin{NewTheoremD}
For $n\geq 4$, let $\phi:{\rm Aut}(W_n)\rightarrow G$ be a group homomorphism. If $G$ does not contain an isomorphic image of ${\rm Sym}(n)$, then the image of $\phi$  is finite. In particular, if $n\geq 5$ then the image has cardinality at most $4$.
\end{NewTheoremD}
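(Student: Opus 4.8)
The plan is to exploit the extension structure of $\mathrm{Aut}(W_n)$. Every automorphism permutes the conjugacy classes of the free factors $\langle s_i\rangle$, which yields a split short exact sequence $1\to K_n\to \mathrm{Aut}(W_n)\to \mathrm{Sym}(n)\to 1$, where $\mathrm{Sym}(n)$ is realized by permuting the generators $s_1,\dots,s_n$ and $K_n$ consists of the automorphisms sending each $s_i$ to a conjugate of itself. I would invoke the structural fact (standard, of McCool type, and presumably set up in Section~2) that $K_n$ is generated by the partial conjugations $\gamma_{ij}\colon s_i\mapsto s_js_is_j,\ s_k\mapsto s_k\ (k\neq i)$, each of which is an involution since $s_j^2=1$. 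Two facts about these generators drive everything: conjugation satisfies $\pi\gamma_{ij}\pi^{-1}=\gamma_{\pi(i)\pi(j)}$ for $\pi\in\mathrm{Sym}(n)$, and $\gamma_{ij}$ commutes with $\gamma_{kl}$ precisely when $i\neq k$, $j\neq k$ and $i\neq l$ (a naive ``disjoint support'' condition is not enough, as the two orders differ exactly when a conjugating letter meets a conjugated index). Now let $\phi$ be as in the statement. Since $\phi(\mathrm{Sym}(n))$ is a subgroup of $G$ and $G$ contains no copy of $\mathrm{Sym}(n)$, the restriction $\phi|_{\mathrm{Sym}(n)}$ cannot be injective, so its kernel is a nontrivial normal subgroup of $\mathrm{Sym}(n)$.

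For $n\geq 5$ the only nontrivial normal subgroups of $\mathrm{Sym}(n)$ are $\mathrm{Alt}(n)$ and $\mathrm{Sym}(n)$, so $\mathrm{Alt}(n)\subseteq\ker\phi$. As $\mathrm{Alt}(n)$ is $2$-transitive on $\{1,\dots,n\}$, it is transitive on ordered pairs, and the conjugation formula together with $\phi(\mathrm{Alt}(n))=1$ forces $\phi(\gamma_{ij})=\phi(\gamma_{12})=:g$ for all $i\neq j$. Hence $\phi(K_n)=\langle g\rangle$ has order at most $2$, and since every $\phi(\pi)$ conjugates $g$ to $\phi(\gamma_{\pi(1)\pi(2)})=g$, the element $g$ is central in the image. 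The image is therefore generated by the central subgroup $\langle g\rangle$ and $\phi(\mathrm{Sym}(n))$, the latter a quotient of $\mathrm{Sym}(n)$ by a subgroup containing $\mathrm{Alt}(n)$ and hence of order at most $2$; it is abelian of order at most $4$, which is the sharp bound.

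For $n=4$ the sequence is the same, but $\mathrm{Sym}(4)$ also has the normal subgroup $V_4=\{e,(12)(34),(13)(24),(14)(23)\}$, which is contained in every nontrivial normal subgroup; hence $V_4\subseteq\ker\phi$. As $V_4$ is not $2$-transitive, I get only three $V_4$-orbits on ordered pairs, namely the classes of $(1,2)$, $(1,3)$, $(1,4)$, so $\phi$ collapses the twelve $\gamma_{ij}$ to at most three involutions $g_1,g_2,g_3$. The key point is that these pairwise commute: across any two of the three orbits one can pick representatives meeting the commutation criterion, e.g. $\gamma_{12}$ with $\gamma_{42}$, $\gamma_{12}$ with $\gamma_{32}$, and $\gamma_{13}$ with $\gamma_{23}$. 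Consequently $\phi(K_4)=\langle g_1,g_2,g_3\rangle$ is an abelian group of exponent $2$ and order at most $8$; since it is normal in the image with quotient a finite quotient of $\mathrm{Sym}(4)$, the image is finite.

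I expect the main obstacle to be twofold. First, one must pin down the generation of $K_n$ by the $\gamma_{ij}$ and state the commutation criterion correctly, since the failure of naive disjointness is exactly what could otherwise break the argument. Second, the $n=4$ bookkeeping is the delicate part: one has to verify that representatives of the three $V_4$-orbits can always be chosen to commute, so that $\phi(K_4)$ is forced to be abelian and therefore finite. The case $n\geq 5$, by contrast, falls out immediately from the $2$-transitivity of $\mathrm{Alt}(n)$, and is what yields the uniform cardinality bound of $4$.
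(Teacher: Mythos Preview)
Your argument is correct. The overall architecture matches the paper's: restrict $\phi$ to the permutation subgroup $\Sigma_n\cong\mathrm{Sym}(n)$, use the hypothesis to force a nontrivial kernel, and then exploit that the partial conjugations form a single $\mathrm{Sym}(n)$-conjugacy class to control the image. For $n\geq 5$ (equivalently, whenever $\mathrm{Alt}(n)$ lies in the kernel) your $2$-transitivity argument and the paper's choice of the generating set $\{\alpha_\pi,\alpha_{(1,2)},\sigma_{34}\mid \pi\in\mathrm{Alt}(n)\}$ with $\sigma_{34}$ and $\alpha_{(1,2)}$ commuting are two phrasings of the same idea and both yield the bound $4$.

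The genuine divergence is in the $n=4$ case with kernel $V_4$. The paper does not analyse the $V_4$-orbits on ordered pairs; instead it invokes Corollary~2.2, namely that $\langle\sigma_{12},\alpha_{(2,3)},\alpha_{(3,4)}\rangle$ is a finite Coxeter group of type $B_3$. Since $V_4$ together with these three involutions generate $\mathrm{Aut}(W_4)$, the image of $\phi$ is a quotient of this finite group. Your route is more elementary and self-contained: you verify by hand that the three $V_4$-orbits of partial conjugations admit pairwise commuting representatives (your choices $\gamma_{12},\gamma_{42},\gamma_{32}$ and $\gamma_{13},\gamma_{23}$ are valid under the commutation criterion $i\neq k$, $j\neq k$, $i\neq l$), so $\phi(K_4)$ is elementary abelian of order at most $8$ and the image is a finite extension of it. The paper's approach is shorter because the $B_3$ fact is already on the shelf; yours avoids that dependency but requires the orbit bookkeeping you flagged as the delicate step. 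Both give finiteness, and neither improves the bound in the $V_4$ case beyond ``finite''.
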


By Theorem D follows that the homomorphism
${\rm Aut}(W_n)\hookrightarrow{\rm Aut}(F_{n-1})\twoheadrightarrow{\rm GL}_{n-1}(\Z)\hookrightarrow{\rm GL}_{n-1}(\R)$
is minimal in the following
sense.
\begin{NewCorollaryE}
For $n\geq 4$, let $\rho:{\rm Aut}(W_n)\rightarrow{\rm GL}_d(K)$ be a linear representation over a field $K$. If $d<n-1$ and ${\rm char}(K)=0$ or ${\rm char}(K)\nmid n$, then the image of $\rho$ is finite.
\end{NewCorollaryE}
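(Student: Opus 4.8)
The plan is to deduce Corollary E directly from Theorem D. Since $\rho$ is a homomorphism from ${\rm Aut}(W_n)$ into $G={\rm GL}_d(K)$, Theorem D guarantees that the image is finite as soon as we show that ${\rm GL}_d(K)$ contains no isomorphic copy of ${\rm Sym}(n)$. Thus the whole task reduces to a statement in the representation theory of the symmetric group: under the hypotheses $d<n-1$ and ${\rm char}(K)=0$ or ${\rm char}(K)\nmid n$, the group ${\rm Sym}(n)$ admits no faithful $d$-dimensional representation over $K$. Equivalently, the minimal dimension of a faithful $K$-linear representation of ${\rm Sym}(n)$ is exactly $n-1$, the upper bound being realized by the standard representation, i.e. the nontrivial summand of the permutation module $K^n$, which splits off precisely because the hypothesis makes $n$ invertible in $K$.

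To prove the lower bound I would argue by contradiction. Assume ${\rm Sym}(n)\hookrightarrow{\rm GL}_d(K)$ with $d\le n-2$, and extend scalars so that $K=\bar{K}$, which changes neither $d$ nor faithfulness. Write $p={\rm char}(K)$ and consider the composition factors of the resulting faithful $K{\rm Sym}(n)$-module $M$. The set $N$ of elements acting trivially on every composition factor is a normal subgroup $N\trianglelefteq{\rm Sym}(n)$ whose image under $\rho$ consists of unipotent matrices (upper unitriangular in a basis adapted to a composition series); hence every element of $N$ has $p$-power order, so $N$ is a normal $p$-subgroup. For $n\ge 5$ the only normal subgroups are $1,{\rm Alt}(n),{\rm Sym}(n)$, and since $|{\rm Alt}(n)|$ and $|{\rm Sym}(n)|$ are divisible by at least two distinct primes, neither is a $p$-group; therefore $N=1$. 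It follows that $\bigcap_i\ker(\text{constituent}_i)=1$, and since each kernel is a normal subgroup lying in $\{1,{\rm Alt}(n),{\rm Sym}(n)\}$, at least one irreducible constituent $D$ must be faithful.

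It remains to bound $\dim D$ from below, and this is where I expect the main difficulty to lie. A faithful irreducible $K{\rm Sym}(n)$-module is in particular nontrivial and distinct from the sign module, so the point is that every nontrivial irreducible module has dimension at least $n-1$ once $p\nmid n$. Over a field of characteristic $0$ this is the classical fact that the only irreducible representations of ${\rm Sym}(n)$ of dimension below $n-1$ are the trivial and sign representations; in the modular setting it is James's theorem on the minimal degree of a nontrivial irreducible $K{\rm Sym}(n)$-module, whose value is exactly $n-1$ when $p\nmid n$ and drops to $n-2$ only when $p\mid n$ — which is exactly the case the hypothesis ${\rm char}(K)\nmid n$ rules out. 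This modular input is the crux: it is precisely here that the arithmetic condition on ${\rm char}(K)$ is consumed. Granting it, we obtain $d\ge\dim D\ge n-1$, contradicting $d\le n-2$.

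Finally I would treat the remaining small case $n=4$ by hand, where $d\le 2$ and the hypothesis forces ${\rm char}(K)\neq 2$. Here every representation of ${\rm Sym}(4)$ of dimension at most $2$ factors through the quotient ${\rm Sym}(4)\twoheadrightarrow{\rm Sym}(3)$ and so contains the Klein four-group in its kernel; hence none is faithful. Combining the two cases shows that ${\rm GL}_d(K)$ contains no copy of ${\rm Sym}(n)$, and Theorem D then yields that the image of $\rho$ is finite.
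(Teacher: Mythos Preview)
Your approach is correct and matches the paper's: both deduce the corollary from Theorem~D by showing that ${\rm GL}_d(K)$ contains no isomorphic copy of ${\rm Sym}(n)$ under the stated hypotheses. The only difference is that the paper simply cites this representation-theoretic fact from \cite[Chap.~19, \S 8, Thm.~22]{Berkovich}, whereas you supply a self-contained argument via composition factors and James's lower bound on the degree of a nontrivial irreducible.
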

From representation theory of ${\rm Sym}(n)$ \cite[Chap.19, \S 8 Thm. 22]{Berkovich} we obtain that ${\rm GL}_d(K)$ does not contain an isomorphic image of ${\rm Sym}(n)$, hence by Theorem D the image of $\rho$ is finite.\\

We have ${\rm Aut}(W_n)\hookrightarrow{\rm Aut}(F_{n-1})$ and ${\rm Sym}(n+1)\nsubseteq{\rm Aut}(F_{n-1})$, see proof of Theorem A in \cite{BridsonVogtmann}. Further, since for $d<n-1$ the group ${\rm Sym}(n)\nsubseteq{\rm Aut}(F_d),{\rm Out}(F_d)$ and ${\rm GL}_d(\Z)$, see \cite[Lemma 2]{BridsonVogtmann}, we obtain the following results.

\begin{NewCorollaryF}
Let $n\geq 4$.
\begin{enumerate}
\item[(i)] If $d<n$, then every homomorphism $\phi:{\rm Aut}(W_n)\rightarrow{\rm Aut}(W_d)$ has finite image.
\item[(ii)] If $d<n-1$, then every homomorphism ${\rm Aut}(W_n)\rightarrow {\rm Aut}(F_d)$ has finite image. The same result is true for homomorphisms  ${\rm Aut}(W_n)\rightarrow{\rm Out}(F_d)$ and ${\rm Aut}(W_n)\rightarrow{\rm GL}_d(\Z)$.
\end{enumerate}
\end{NewCorollaryF}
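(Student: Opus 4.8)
The plan is to obtain both parts as direct applications of Theorem D: in each case I will verify that the codomain contains no isomorphic copy of ${\rm Sym}(n)$, and then Theorem D forces the image of the homomorphism to be finite. Every non-embedding statement I need is precisely the one recorded in the paragraph immediately preceding the corollary, so the proof reduces to matching up the ranges of $d$ correctly.

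For part (ii), I would argue directly. Since $d<n-1$, the cited result of Bridson--Vogtmann, \cite[Lemma 2]{BridsonVogtmann}, gives ${\rm Sym}(n)\nsubseteq{\rm Aut}(F_d)$, ${\rm Sym}(n)\nsubseteq{\rm Out}(F_d)$ and ${\rm Sym}(n)\nsubseteq{\rm GL}_d(\Z)$ in exactly this range. Taking $G$ to be each of these three groups in turn, Theorem D shows that every homomorphism from ${\rm Aut}(W_n)$ into them has finite image.

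For part (i) the extra ingredient is M\"uhlherr's embedding. When $d\geq 3$ we have ${\rm Aut}(W_d)\hookrightarrow{\rm Aut}(F_{d-1})$, so any copy of ${\rm Sym}(n)$ sitting inside ${\rm Aut}(W_d)$ would yield a copy inside ${\rm Aut}(F_{d-1})$. But $d<n$ is equivalent to $d-1<n-1$, so the same Bridson--Vogtmann bound gives ${\rm Sym}(n)\nsubseteq{\rm Aut}(F_{d-1})$ and hence ${\rm Sym}(n)\nsubseteq{\rm Aut}(W_d)$; Theorem D then applies. The two smallest ranks must be checked separately, since M\"uhlherr's embedding needs rank at least $3$: here ${\rm Aut}(W_1)=\{{\rm id}\}$ is trivial, and ${\rm Aut}(W_2)\cong W_2$ is the infinite dihedral group, which has no element of order $3$ and so contains no copy of ${\rm Sym}(n)$ for $n\geq 4$. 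In both cases the conclusion is immediate, either from triviality of the codomain or from Theorem D.

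I expect no genuine obstacle here; the whole content of the corollary has already been placed into Theorem D together with the non-embedding results quoted just above. The single point that has to be handled with care is the one-step shift between the two ranges: passing from $W_d$ to $F_{d-1}$ lowers the rank by one, which is exactly why part (i) can allow the larger range $d<n$ while part (ii), stated directly for ${\rm Aut}(F_d)$ and its companions, is confined to $d<n-1$.
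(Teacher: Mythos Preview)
Your proposal is correct and follows essentially the same approach as the paper: both parts are direct applications of Theorem D together with the non-embedding facts from \cite{BridsonVogtmann} recorded in the paragraph preceding the corollary, with M\"uhlherr's embedding used to reduce part (i) to the free-group case. If anything you are slightly more careful than the paper, since you separately verify the cases $d=1$ and $d=2$ where the embedding ${\rm Aut}(W_d)\hookrightarrow{\rm Aut}(F_{d-1})$ is not available.
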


\subsection*{Remarks} My research on the automorphism group of the universal Coxeter group was motivated by discussion with N. Leder about property $\F\mathcal{A}$ for automorphism groups of graph products. In his preprint \cite{Leder} he proved several results concerning Serre's property ${\rm F}\mathcal{A}$ for automorphism groups of free products of cyclic groups. Further, J. Flechsig wrote his Master's Thesis about algebraic and geometric properties of \Aut\ based on this article, see \cite{Flechsig}.

\section*{Acknowledgment}
The author thanks the referee for careful reading of the manuscript and for helpful remarks concerning \CAT\ groups.

\section{Preliminaries}

Let $(W_n, S_n)$ be the universal Coxeter system of rank $n$, i.e $S_n$ is a set $\left\{s_1,\ldots, s_n\right\}$ and the group $W_n$ is given by the following presentation $W_n=\langle S_n\mid s^2_1,\ldots, s^2_n\rangle$.
By \Aut\ we denote the automorphism group of the group $W_n$.

The purpose of this section is to give a generating set of the group \Aut\ for $n\geq 2$. 
Let us first introduce a notation for some elements of \Aut.   We define the partial conjugations $\sigma_{ij}$ and  permutations $\alpha_\pi$ for $1\leq i\neq j\leq n$ and $\pi\in{\rm Sym}(n)$ as follows:
\[
\begin{matrix}
\sigma_{ij}(s_{k}):=\begin{cases} s_{i}s_{j}s_{i} & \mbox{if $k=j$,}  \\ 
				     s_{k} & \mbox{if $k\neq j$,}   
\end{cases}
& \text{$ $}
\alpha_\pi(s_{k}):=s_{\pi(k)}
\end{matrix}
\]
It was proven by M\"uhlherr in \cite[Theorem B]{Muehlherr}) that for~$n\geq 2$ the group \Aut\  is generated by the set 
\[
\left\{\alpha_\pi, \ \sigma_{ij}\mid \pi\in{\rm Sym}(n), 1\leq i\neq j\leq n \right\}.
\]
The subgroup generated by $\left\{\alpha_\pi\mid\pi\in{\rm Sym}(n)\right\}$ is isomorphic to ${\rm Sym}(n)$. It is well known that this group is generated by the involutions $(i,i+1)$ with $i=1, \ldots, n-1$. Hence the group \Aut\ is generated by the set 
\[
\left\{\alpha_{(i, i+1)}, \ \sigma_{kl}\mid 1\leq i\leq n-1, 1\leq k\neq l\leq n \right\}.
\]
Further we have the following relations: $\alpha_\pi\sigma_{ij}\alpha^{-1}_\pi=\sigma_{\pi(i)\pi(j)}$. 
Therefore we obtain the following generating set for \Aut:
\begin{proposition}
\label{GenAut}
Let $n\geq 2$ and $1\leq k\neq l\leq n$. The group \Aut\ is generated by 
\[
Y:=\left\{ \sigma_{kl}, \ \alpha_{(i,i+1)} \mid i=1,\ldots, n-1\right\}.
\]
\end{proposition}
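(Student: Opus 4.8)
The plan is to verify that the subgroup $\langle Y\rangle$ generated by $Y$ already contains the full generating set $\{\alpha_\pi,\ \sigma_{ij}\mid \pi\in{\rm Sym}(n),\ 1\leq i\neq j\leq n\}$ furnished by M\"uhlherr's theorem \cite{Muehlherr}; once this is established, the equality $\langle Y\rangle=$ \Aut\ follows immediately.

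First I would recover every permutation automorphism. Since the transpositions $(i,i+1)$ for $i=1,\ldots,n-1$ generate ${\rm Sym}(n)$, and the assignment $\pi\mapsto\alpha_\pi$ restricts to an isomorphism from ${\rm Sym}(n)$ onto the subgroup $\langle\alpha_\pi\mid\pi\in{\rm Sym}(n)\rangle$, the elements $\alpha_{(i,i+1)}\in Y$ already suffice to generate all of the $\alpha_\pi$. Hence $\alpha_\pi\in\langle Y\rangle$ for every $\pi\in{\rm Sym}(n)$.

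Next I would produce every partial conjugation from the single element $\sigma_{kl}\in Y$, using the relation $\alpha_\pi\sigma_{ij}\alpha^{-1}_\pi=\sigma_{\pi(i)\pi(j)}$ recorded above. Fix an arbitrary pair $1\leq i\neq j\leq n$. Because $k\neq l$ and $i\neq j$, I can choose a permutation $\pi\in{\rm Sym}(n)$ with $\pi(k)=i$ and $\pi(l)=j$: one simply prescribes these two images and extends to a bijection of $\{1,\ldots,n\}$ in any manner. For such a $\pi$ the relation gives $\sigma_{ij}=\alpha_\pi\sigma_{kl}\alpha^{-1}_\pi$, and since both $\alpha_\pi$ and $\sigma_{kl}$ lie in $\langle Y\rangle$, so does $\sigma_{ij}$.

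Combining the two steps, $\langle Y\rangle$ contains all $\alpha_\pi$ and all $\sigma_{ij}$, and therefore equals \Aut. I do not anticipate a genuine obstacle here: the argument is essentially the observation that ${\rm Sym}(n)$ acts transitively on ordered pairs of distinct indices, so a single partial conjugation already generates all the others once the whole permutation subgroup is available. The only point deserving a line of justification is the existence of the connecting permutation $\pi$, which this transitivity guarantees.
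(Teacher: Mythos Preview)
Your proposal is correct and follows exactly the line of reasoning the paper uses in the paragraph preceding the proposition: Mühlherr's generating set, the adjacent transpositions generating ${\rm Sym}(n)$, and the conjugation relation $\alpha_\pi\sigma_{ij}\alpha_\pi^{-1}=\sigma_{\pi(i)\pi(j)}$ to recover every $\sigma_{ij}$ from the single $\sigma_{kl}$. You have simply made explicit the transitivity step that the paper leaves to the reader.
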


\begin{corollary}
\label{epi}
For $n\geq 2$, the group \Aut\ is a quotient of a Coxeter group $G$ whose Coxeter graph looks like as follows

\begin{figure}[h]
\begin{center}
\begin{tikzpicture}
 \draw (0,0.1)--(0.9,1);
 \draw (0,0.1)--(0,1.9);
 \draw (0,1.9)--(0.9,1);
 \draw (1.1,1)--(2.4,1);
 \draw (2.6,1)--(3.9,1);
 \draw (4.1,1)--(5.4,1);
 \draw (5.6,1)--(6.9,1);
 \draw (8.6,1)--(9.9,1);
 \draw (0,0) circle (3pt);
 \draw (0,-0.5) node {\footnotesize{$\overline{\sigma_{12}}$}};
 \draw (0,2) circle (3pt);
 \draw (0,2.5) node {\footnotesize{$(1, 2)$}};
 \draw (1,1) circle (3pt);
 \draw (1.25,0.5) node {\footnotesize{$(2, 3)$}};
 \draw (2.5,1) circle (3pt);
 \draw (2.5,1.5) node {\footnotesize{$(3, 4)$}};
 \draw (4,1) circle (3pt);
 \draw (4,0.5) node {\footnotesize{$(4, 5)$}};
 \draw (5.5,1) circle (3pt);
 \draw (5.5,1.5) node {\footnotesize{$(5, 6)$}};
 \draw (7.80,1) node {$\ldots$};
 \draw (10,1) circle (3pt);
 \draw (10,0.5) node {\footnotesize{$(n-1, n)$}};
 
 \draw (0.5,0.35) node {\footnotesize{$4$}};
 
  \draw (-0.3,0.85) node {\footnotesize{$\infty$}};

\end{tikzpicture}
\caption{}
\end{center}
\end{figure}

Further, the subgroup of \Aut\ generated by $\left\{\sigma_{12}, \alpha_{(i,i+1)}\mid i=2, \ldots, n-1\right\}$ is finite.
\end{corollary}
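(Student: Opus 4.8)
The plan is to exhibit an explicit surjective homomorphism from the Coxeter group $G$ of the statement onto \Aut, and then to recognise the subgroup in question as a quotient of a finite Coxeter group of type $B_{n-1}$. Write $x_0, x_1, \ldots, x_{n-1}$ for the standard generators of $G$, where $x_0$ is the node $\overline{\sigma_{12}}$ and $x_i$ is the node $(i,i+1)$ for $1\le i\le n-1$. I would define $\varphi\colon G\to {\rm Aut}(W_n)$ on generators by $\varphi(x_0)=\sigma_{12}$ and $\varphi(x_i)=\alpha_{(i,i+1)}$, check that the defining Coxeter relations of $G$ hold for these images, and then invoke Proposition~\ref{GenAut}: since the images form the generating set $Y$, the map $\varphi$ is onto, so \Aut\ is a quotient of $G$.

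Verifying the relations splits into four easy families. First, each generator maps to an involution: $\alpha_{(i,i+1)}^2={\rm id}$ because $(i,i+1)$ is an involution in ${\rm Sym}(n)$, and a direct check gives $\sigma_{12}^2(s_2)=s_1(s_1 s_2 s_1)s_1=s_2$, so $\sigma_{12}^2={\rm id}$. Second, the relations carried by the $A_{n-1}$-subgraph on $x_1,\ldots,x_{n-1}$ hold because $\langle\alpha_\pi\mid\pi\in{\rm Sym}(n)\rangle\cong{\rm Sym}(n)$ with exactly the standard Coxeter presentation in the transpositions $(i,i+1)$. Third, the missing edges between $x_0$ and $x_i$ for $i\ge 3$ encode commutation, which follows from the conjugation formula $\alpha_\pi\sigma_{ij}\alpha_\pi^{-1}=\sigma_{\pi(i)\pi(j)}$ stated before Proposition~\ref{GenAut}: since $(i,i+1)$ fixes $1$ and $2$ for $i\ge 3$, we get $\alpha_{(i,i+1)}\sigma_{12}\alpha_{(i,i+1)}^{-1}=\sigma_{12}$. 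The $\infty$-edge between $x_0$ and $x_1$ imposes no relation, so nothing is required there.

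The only genuine computation is the label-$4$ edge, i.e. $(\sigma_{12}\alpha_{(2,3)})^4={\rm id}$. Setting $\rho=\sigma_{12}\circ\alpha_{(2,3)}$ one finds $\rho(s_1)=s_1$, $\rho(s_2)=s_3$, $\rho(s_3)=s_1 s_2 s_1$, and $\rho(s_k)=s_k$ for $k\ge 4$; iterating and repeatedly using $s_1^2=1$ gives $\rho^4(s_2)=s_2$ and $\rho^4(s_3)=s_3$, hence $\rho^4={\rm id}$. This is the step I expect to be the main (though still short) obstacle, since it is the one relation that is not purely formal.

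For the final sentence, I would observe that the elements $\sigma_{12},\alpha_{(2,3)},\ldots,\alpha_{(n-1,n)}$ satisfy precisely the Coxeter relations read off the subgraph obtained from the graph of the statement by deleting the node $(1,2)$, and these are a subset of the relations already verified for $\varphi$. That subgraph is a path on $n-1$ nodes whose first edge ($\overline{\sigma_{12}}$ to $(2,3)$) carries the label $4$ and whose remaining edges carry the default label $3$, i.e. it is the Coxeter diagram of type $B_{n-1}$. Hence $\langle\sigma_{12},\alpha_{(i,i+1)}\mid i=2,\ldots,n-1\rangle$ is a quotient of the Coxeter group of type $B_{n-1}$, which is the finite hyperoctahedral group of order $2^{n-1}(n-1)!$; therefore this subgroup is finite, as claimed.
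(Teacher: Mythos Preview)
Your proposal is correct and follows exactly the paper's approach: define the map on generators, check well-definedness via the Coxeter relations, use Proposition~\ref{GenAut} for surjectivity, and identify the subgraph on $\overline{\sigma_{12}},(2,3),\ldots,(n-1,n)$ as type $B_{n-1}$ to conclude finiteness. The only difference is that you actually carry out the verification of the relations (in particular the order-$4$ computation for $\sigma_{12}\alpha_{(2,3)}$), whereas the paper simply asserts that the map is well-defined; your added detail is correct and harmless.
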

\begin{proof}
The elements in $Y$ are involutions. We define a homomorphism $f:G\rightarrow{\rm Aut}(W_n)$
as follows:
for $1\leq i\leq n-1$: $(i,i+1)\mapsto \alpha_{(i,i+1)}$ and $\overline{\sigma_{12}}\mapsto\sigma_{12}$. This map is well-defined and surjective. The subgroup of $G$ generated by $\left\{\overline{\sigma_{12}}, (i,i+1)\mid i=2,\ldots, n-1\right\}$ is a Coxeter group of type $B_{n-1}$ and is therefore finite. The subgroup of \Aut\ generated by 
$\left\{\sigma_{12}, \alpha_{(i,i+1)}\mid i=2, \ldots, n-1\right\}$ is the image of the subgroup generated by $\left\{\overline{\sigma_{12}}, (i,i+1)\mid i=2,\ldots, n-1\right\}$ under $f$ and therefore also finite.
\end{proof}

Let us consider the map $\epsilon:W_n\rightarrow\left\{-1,1\right\}$  which sends each generator $s_i$ to $-1$. M\"uhlherr proved in \cite{Muehlherr} that the kernel of $\epsilon$ is a characteristic subgroup of $W_n$ and it is isomorphic to $F_{n-1}$. Further the set $\left\{x_i:=s_is_{i+1}\mid i=1, \ldots, n-1\right\}$ is a basis of this kernel and the natural map
\[
\iota:{\rm Aut}(W_n)\rightarrow{\rm Aut}({\rm ker}(\epsilon))\cong{\rm Aut}(F_{n-1})
\]
is for $n\geq 3$ a monomorphism. An easy calculation yields:
\begin{lemma}
\label{Iso}
The map $\iota:{\rm Aut}(W_3)\rightarrow{\rm Aut}(F_2)$ is an isomorphism.
\end{lemma}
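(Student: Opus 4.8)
The plan is to reduce surjectivity of $\iota$ (injectivity being already granted for $n\geq 3$) to the classical short exact sequence
\[
1 \longrightarrow {\rm Inn}(F_2) \longrightarrow {\rm Aut}(F_2) \xrightarrow{\ \Phi\ } {\rm GL}_2(\Z) \longrightarrow 1,
\]
where $\Phi$ is induced by the abelianization $F_2\to\Z^2$. By a theorem of Nielsen, in rank two every ${\rm IA}$-automorphism is inner, so $\Phi$ is onto with kernel \emph{exactly} ${\rm Inn}(F_2)$. Writing $H:=\iota({\rm Aut}(W_3))\leq{\rm Aut}(F_2)$, it then suffices to prove two things: that $H\supseteq\ker\Phi={\rm Inn}(F_2)$, and that $\Phi|_H$ is surjective. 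Indeed, a subgroup containing the kernel of a surjection and mapping onto the quotient is the whole group, so these two facts give $H={\rm Aut}(F_2)$ and hence that $\iota$ is an isomorphism.

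For the containment ${\rm Inn}(F_2)\subseteq H$, I would use that inner automorphisms of $W_3$ coming from elements of $\ker(\epsilon)$ restrict to inner automorphisms of $F_2$. Explicitly, for $w\in\ker(\epsilon)=F_2$ the inner automorphism $\gamma_w\in{\rm Aut}(W_3)$ satisfies $\iota(\gamma_w)(y)=wyw^{-1}$ for all $y\in F_2$, which is conjugation by $w$ inside $F_2$; letting $w$ range over $F_2$ produces every element of ${\rm Inn}(F_2)$. This step is essentially free and needs no computation.

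For surjectivity of $\Phi|_H$, I would compute the images under $\iota$ of the three generators $\sigma_{12},\alpha_{(1,2)},\alpha_{(2,3)}$ of ${\rm Aut}(W_3)$ from Proposition \ref{GenAut}, using the basis $x_1=s_1s_2$, $x_2=s_2s_3$ together with the identities $s_2s_1=x_1^{-1}$, $s_3s_2=x_2^{-1}$, $s_1s_3=x_1x_2$. This yields $\iota(\alpha_{(1,2)})\colon x_1\mapsto x_1^{-1},\, x_2\mapsto x_1x_2$; $\iota(\alpha_{(2,3)})\colon x_1\mapsto x_1x_2,\, x_2\mapsto x_2^{-1}$; and $\iota(\sigma_{12})\colon x_1\mapsto x_1^{-1},\, x_2\mapsto x_1^2x_2$, with abelianizations
\[
\begin{pmatrix} -1 & 1 \\ 0 & 1\end{pmatrix},\qquad \begin{pmatrix} 1 & 0 \\ 1 & -1\end{pmatrix},\qquad \begin{pmatrix} -1 & 2 \\ 0 & 1\end{pmatrix}.
\]
A short check shows these generate ${\rm GL}_2(\Z)$: the product of the third and first equals $E_{12}=\left(\begin{smallmatrix}1&1\\0&1\end{smallmatrix}\right)$, conjugating and multiplying $E_{12}$ by the second matrix produces $E_{21}=\left(\begin{smallmatrix}1&0\\1&1\end{smallmatrix}\right)$ (so all of ${\rm SL}_2(\Z)$ is obtained), and the second matrix has determinant $-1$. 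Hence $\Phi|_H$ is onto, and combined with the previous paragraph this forces $H={\rm Aut}(F_2)$.

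The only genuine work is bookkeeping: rewriting the images of $x_1,x_2$ as reduced words in $\ker(\epsilon)$ and verifying the ${\rm GL}_2(\Z)$-generation, both routine. The one external ingredient is Nielsen's identification $\ker\Phi={\rm Inn}(F_2)$, which is special to rank two and is precisely what makes the containment ${\rm Inn}(F_2)\subseteq H$ sufficient; without it one would instead have to exhibit a full Nielsen generating set inside $H$ (an inversion and a suitable pair of transvections), which is a more tedious route I would avoid.
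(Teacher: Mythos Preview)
Your argument is correct: the computations of $\iota(\sigma_{12})$, $\iota(\alpha_{(1,2)})$, $\iota(\alpha_{(2,3)})$ on the basis $x_1,x_2$ are accurate, the containment ${\rm Inn}(F_2)\subseteq H$ via inner automorphisms of $W_3$ by elements of $\ker(\epsilon)$ is valid, and the three matrices do generate ${\rm GL}_2(\Z)$ (your product of the third and first gives $E_{12}$, and from $E_{12}$ together with the second matrix one recovers the rotation $\left(\begin{smallmatrix}0&-1\\1&0\end{smallmatrix}\right)$ and hence $E_{21}$). Combined with Nielsen's identification $\ker\Phi={\rm Inn}(F_2)$ this forces $H={\rm Aut}(F_2)$.

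The paper does not spell out a proof; it records the lemma as ``an easy calculation'' and points to \cite{Kramer} for an explicit (involutive) generating set of ${\rm Aut}(F_2)$. The intended route is thus to compute $\iota$ on the generators $\sigma_{12},\alpha_{(1,2)},\alpha_{(2,3)}$ and recognise the images directly as (or as products of) Kramer's generators of ${\rm Aut}(F_2)$, bypassing ${\rm GL}_2(\Z)$ entirely. Your approach trades that pattern-matching for a structural step: you pass to the abelianisation, where generation is easy to verify, and then invoke Nielsen's theorem to control the kernel. The paper's route is slightly more self-contained (no appeal to the ${\rm IA}={\rm Inn}$ fact), while yours makes clearer \emph{why} the surjectivity holds and isolates exactly where rank two is used.
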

For a generating set of ${\rm Aut}(F_2)$ see \cite[Proposition 1]{Kramer}.

\section{Fixed point properties of \Aut}
Definitions and properties concerning \CAT\ spaces can be found in \cite{BH}.
We need the following crucial definition.

\begin{definition}
\begin{enumerate}
\item[(i)] A  group
$G$ has Serre's property $\F\mathcal{A}$ if  any simplicial action without inversions on a simplicial tree has a fixed point.
\item[(ii)] A  group
$G$ has property $\F\mathcal{A}_d$ if  any  isometric action  on a complete \CAT\ space of covering dimension $d$
has a fixed point.
\end{enumerate}
\end{definition}

Our main technique in the proof of Theorem A is based on the following criterion, see \cite{Varghese} for the proof.  
\begin{FarbF}
\label{HellyGroup}
Let $G$ be a group, $Y$ a finite generating set of~$G$ and~$X$ a complete $d$-dimensional \CAT\ space. If
$\Phi:G\rightarrow{\rm Isom}(X)$ is a homomorphism such that each $(d+1)$-element subset of $Y$ has a fixed point in $X$, then $G$ has a fixed point in $X$.
\end{FarbF}

The following version of the Bruhat-Tits Fixed Point Theorem \cite[\Romannum{2} 2.8]{BH} is crucial for our arguments. 
\begin{proposition}
\label{boundedOrbit}
 Let $G$ be a group acting on a complete \CAT\ space $X$ by isometries. Then the following conditions are equivalent: 
\begin{enumerate}
 \item[$(i)$] The group $G$ has a global fixed point.
 \item[$(ii)$] Each orbit of $G$ is bounded.
 \item[$(iii)$] The group $G$ has a bounded orbit.
\end{enumerate}
If the group $G$ satisfies one of the conditions above, then $G$ is called bounded on $X$.\\
\end{proposition}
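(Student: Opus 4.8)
The plan is to prove the cycle of implications $(i)\Rightarrow(ii)\Rightarrow(iii)\Rightarrow(i)$. The first implication is immediate: if $p$ is a global fixed point, then for every $x\in X$ and every $g\in G$ we have $d(p,g\cdot x)=d(g\cdot p,g\cdot x)=d(p,x)$, since $g$ acts as an isometry and fixes $p$. Hence the orbit $G\cdot x$ lies in the closed ball of radius $d(p,x)$ centered at $p$, so every orbit is bounded. The implication $(ii)\Rightarrow(iii)$ is trivial once $X$ is nonempty, as any single orbit is then bounded.

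The substance of the proposition lies in $(iii)\Rightarrow(i)$, and here I would invoke the circumcenter of a bounded set in a complete \CAT\ space. Let $A:=G\cdot x$ be a bounded orbit and set $r:=\inf_{y\in X}\sup_{a\in A}d(y,a)$, the circumradius of $A$. The key geometric input is that $A$ possesses a unique circumcenter, that is, a unique point $c\in X$ with $\sup_{a\in A}d(c,a)=r$. I would establish both existence and uniqueness from the Bruhat--Tits (CN) inequality, which the \CAT\ hypothesis supplies: if $m$ denotes the midpoint of the geodesic $[c_1,c_2]$ joining two points $c_1,c_2\in X$, then for every $a\in A$
\[
d(m,a)^2\leq \tfrac{1}{2}\,d(c_1,a)^2+\tfrac{1}{2}\,d(c_2,a)^2-\tfrac{1}{4}\,d(c_1,c_2)^2.
\]
Taking the supremum over $a\in A$ on both sides forces $d(c_1,c_2)=0$ whenever $c_1$ and $c_2$ both realize the radius $r$, which gives uniqueness. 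The same inequality shows that any minimizing sequence for the function $y\mapsto\sup_{a\in A}d(y,a)$ is Cauchy, and completeness of $X$ then produces the circumcenter $c$.

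Finally I would exploit uniqueness to extract the fixed point. Because $A=G\cdot x$ is $G$-invariant and each $g\in G$ acts by an isometry, $g$ carries a smallest enclosing ball of $A$ to a smallest enclosing ball of $g\cdot A=A$ of the same radius $r$; consequently $g\cdot c$ is again a circumcenter of $A$. By uniqueness $g\cdot c=c$ for all $g\in G$, so $c$ is the desired global fixed point, completing the cycle.

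I expect the main obstacle to be the verification of uniqueness of the circumcenter, as this is precisely the step that uses the \CAT\ condition in an essential way: the convexity encoded in the displayed comparison inequality is what both rules out two distinct centers and simultaneously forces the Cauchy property needed for existence. Once this geometric fact is in hand, the remainder of the argument is purely formal, relying only on $G$-invariance of the orbit and the fact that isometries preserve circumradii.
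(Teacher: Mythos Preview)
Your proof is correct and follows exactly the approach the paper takes: the paper records $(i)\Rightarrow(ii)$ and $(ii)\Rightarrow(iii)$ as trivial and for $(iii)\Rightarrow(i)$ simply cites the Bruhat--Tits fixed point theorem \cite[\Romannum{2} 2.8]{BH}, whose proof is precisely the circumcenter argument you have written out in detail. There is nothing to correct; you have merely unpacked the cited reference.
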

The implications $i)\Rightarrow ii)$ and $ii)\Rightarrow iii)$ are trivial, and $iii)\Rightarrow i)$ is proven in \cite[\Romannum{2} 2.8]{BH}.
$ $\\

The following corollary  is an easy consequence of Proposition \ref{boundedOrbit}.

\begin{corollary}
\label{comm}
Let $G_{1}$, $G_{2}$ be groups, $X$ a complete \CAT\ space and 
\begin{align*}
\phi_{1}&:G_{1}\rightarrow{\rm Isom}(X),\\
\phi_{2}&:G_{2}\rightarrow{\rm Isom}(X) 
\end{align*}
 homomorphisms. If $G_{1}$ and $G_{2}$ are bounded on $X$ and $\phi_{1}(g_{1})\circ\phi_{2}(g_{2})=\phi_{2}(g_{2})\circ\phi_{1}(g_{1})$ for all $g_{1}$ in $G_{1}$ and $g_{2}$ in $G_{2}$, then the map
\begin{align*}
 \phi_{1}\times\phi_{2}:G_{1}\times G_{2}&\rightarrow{\rm Isom}(X)\\
                           (g_{1}, g_{2})&\mapsto\phi(g_{1})\circ\phi(g_{2})
\end{align*}
is a homomorphism and $G_{1}\times G_{2}$ is bounded on $X$.
\end{corollary}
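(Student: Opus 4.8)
The plan is to treat the two assertions separately: the homomorphism property follows by a direct computation, while the boundedness follows from two applications of Proposition~\ref{boundedOrbit}, first on $X$ and then on a suitable convex subspace.

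First I would verify that $\phi_1\times\phi_2$ is a homomorphism. Expanding the product in $G_1\times G_2$ and using that $\phi_1$ and $\phi_2$ are themselves homomorphisms, one gets
\[
(\phi_1\times\phi_2)\big((g_1,g_2)(g_1',g_2')\big)=\phi_1(g_1)\phi_1(g_1')\phi_2(g_2)\phi_2(g_2'),
\]
whereas composing the images yields $\phi_1(g_1)\phi_2(g_2)\phi_1(g_1')\phi_2(g_2')$. The two expressions agree precisely because the commuting hypothesis lets us interchange $\phi_1(g_1')$ and $\phi_2(g_2)$. Thus this step is immediate and carries no real difficulty.

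For boundedness I would argue via fixed point sets. Since $G_1$ is bounded on $X$, Proposition~\ref{boundedOrbit} provides a global fixed point, so the set $F:=\left\{x\in X\mid \phi_1(g_1)(x)=x \text{ for all } g_1\in G_1\right\}$ is nonempty. As each $\phi_1(g_1)$ is an isometry of a \CAT\ space, $F$ is closed and convex: convexity follows because the geodesic joining two points of $F$ is unique and is mapped by any $\phi_1(g_1)$ to a geodesic with the same endpoints, hence fixed pointwise. Being a closed convex subset of a complete \CAT\ space, $F$ is itself a complete \CAT\ space. The commuting hypothesis then shows that $\phi_2(G_2)$ leaves $F$ invariant: if $x\in F$ and $g_2\in G_2$, then for every $g_1$ one has $\phi_1(g_1)\phi_2(g_2)(x)=\phi_2(g_2)\phi_1(g_1)(x)=\phi_2(g_2)(x)$, so $\phi_2(g_2)(x)\in F$.

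Finally I would apply Proposition~\ref{boundedOrbit} a second time, now to the action of $G_2$ on $F$. The $G_2$-orbits in $F$ are in particular orbits in $X$, hence bounded because $G_2$ is bounded on $X$; therefore $G_2$ fixes some point $y\in F$. Since $y\in F$ it is fixed by $G_1$ as well, so $(\phi_1\times\phi_2)(g_1,g_2)(y)=\phi_1(g_1)(y)=y$ for every $(g_1,g_2)\in G_1\times G_2$. Thus $G_1\times G_2$ has a singleton (in particular bounded) orbit and is bounded on $X$. The only point requiring genuine care is the convexity of $F$, which is what makes $F$ a \CAT\ space on which Proposition~\ref{boundedOrbit} can be reapplied; everything else is formal.
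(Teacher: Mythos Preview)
Your argument is correct and is a perfectly good realisation of what the paper leaves implicit: the paper gives no proof of this corollary beyond calling it ``an easy consequence of Proposition~\ref{boundedOrbit}''. One can even avoid the detour through the convexity of $F$: once $G_1$ fixes a point $x_0$ (Proposition~\ref{boundedOrbit}), commutativity yields $\phi_1(g_1)\phi_2(g_2)(x_0)=\phi_2(g_2)\phi_1(g_1)(x_0)=\phi_2(g_2)(x_0)$, so the $(G_1\times G_2)$-orbit of $x_0$ is exactly its $G_2$-orbit, hence bounded, and a second application of Proposition~\ref{boundedOrbit} on $X$ itself finishes the job. Your version trades this observation for the (standard) fact that fixed-point sets in \CAT\ spaces are closed and convex; both paths are short and essentially equivalent.
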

For the proof of Theorem A we need one more ingredient.
\begin{proposition}(\cite[3.6]{MappingClassFA})
\label{conjugates}
 Let $k$ and $l$ be in $\mathbb{N}_{>0}$ and let $X$ be a complete $d$-dimensional \CAT\ space, with $d<k\cdot l$. Let $S$ be a subset of ${\rm Isom}(X)$ and let $S_{1},\ldots, S_{l}$ be conjugates of $S$ such that $[S_{i}, S_{j}]=1$ for $i\neq j$. If each $k$-element subset of $S$ has a fixed point in $X$, then each finite subset of $S$ has a fixed point in $X$.
\end{proposition}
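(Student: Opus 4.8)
The plan is to argue by induction on the number $l$ of commuting conjugates, constantly translating between the three equivalent conditions of Proposition \ref{boundedOrbit} — ``fixes a point'', ``has a bounded orbit'', ``is bounded'' — so that fixed points can be produced by boundedness arguments and then recombined. Before starting the induction I would make two reductions. First, by the Farb Fixed Point Criterion applied to $\langle F\rangle$ with generating set $F$, any finite $F\subseteq S$ fixes a point as soon as every $(d+1)$-element subset of $F$ does; hence it suffices to prove that every subset of $S$ of cardinality at most $d+1$ fixes a point. Since $d+1\le kl$, it is in fact enough to prove the slightly stronger bookkeeping statement that \emph{every subset of $S$ of cardinality at most $kl$ fixes a point} (reading the hypothesis, as one must, as ``every subset of size $\le k$ fixes a point''). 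For $l=1$ this is immediate, because then $kl=k$ and the hypothesis is exactly this statement; this is the base case.

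The engine driving the induction is the following consequence of Corollary \ref{comm}. If $A_1,\dots,A_l$ are subsets with $A_i\subseteq S_i$ and $|A_i|\le k$, then each $A_i$ fixes a point (being a $\le k$-element subset of a conjugate of $S$), so each group $\langle A_i\rangle$ is bounded; as the $S_i$ pairwise commute, so do the $\langle A_i\rangle$, and iterating Corollary \ref{comm} shows that $\langle A_1,\dots,A_l\rangle$ is bounded and therefore that $A_1\cup\dots\cup A_l$ fixes a common point. Thus \emph{transversal} unions, built by taking a $\le k$-element piece from each of the $l$ commuting copies, always fix a point, and this step uses no dimension hypothesis at all.

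For the inductive step I would try to cut the problem down to $l-1$ commuting copies inside a lower-dimensional subspace. Fix a $k$-element subset $A\subseteq S_l$; it fixes a point, so $Y:=\mathrm{Fix}(A)$ is a non-empty closed convex, hence complete \CAT, subspace on which $S_1,\dots,S_{l-1}$ act, because each of them commutes with $S_l\supseteq A$ and commuting isometries preserve fixed-point sets. If one can arrange that the covering dimension of $Y$ is at most $d-k<k(l-1)$ and that the restrictions of $S_1,\dots,S_{l-1}$ to $Y$ are again $l-1$ pairwise commuting conjugates of a single set whose $\le k$-element subsets fix points in $Y$, then the induction hypothesis applied to $Y$ finishes the argument.

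The main obstacle is precisely the control of this dimension drop: for a general complete \CAT\ space there is no reason for the fixed-point set of a $k$-element subset to have covering dimension smaller by $k$, the worst case being when the $k$ chosen isometries share a large common fixed-point set, so that the ``budget'' $k$ buys almost no drop. Resolving this is where the hypothesis $d<kl$ must really be spent, and I expect the honest route to be the dual, Helly-theoretic picture: a \emph{minimal} finite subset of $S$ failing to fix a point would give, via the transversal engine above replicated over the $l$ commuting copies, an arrangement of convex fixed-point sets whose non-empty proper intersections but empty total intersection force an isometrically embedded flat (equivalently a Helly obstruction) of dimension at least $kl>d$, contradicting $\dim X=d$. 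Making this flat-building quantitatively match the factor $k$ per copy is the technical heart of the proof, and is the step I expect to be hardest to get exactly right.
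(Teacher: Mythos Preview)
The paper does not give its own proof of this proposition at all: it is simply quoted from Bridson's paper \cite[3.6]{MappingClassFA} and then used as a black box in the proof of Theorem~A. So there is no in-paper argument to compare your proposal against; the relevant comparison is with Bridson's original proof.

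Your preliminary reductions are right and are exactly the ingredients Bridson uses: the reduction via Farb's Fixed Point Criterion to subsets of size at most $d+1\le kl$, and the ``transversal engine'' (Corollary~\ref{comm} iterated) showing that a union $A_1\cup\dots\cup A_l$ with $A_i\subseteq S_i$ and $|A_i|\le k$ always fixes a point. Where your proposal departs from a proof is the inductive scheme. Inducting on $l$ by passing to $Y=\mathrm{Fix}(A)$ for a $k$-element $A\subseteq S_l$ cannot be made to work, for the reason you yourself flag: in an arbitrary complete $d$-dimensional \CAT\ space there is no mechanism forcing the covering dimension of a fixed-point set to drop at all, let alone by $k$. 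Your fallback ``Helly/flat'' paragraph is a plausible heuristic but is not an argument; in particular, empty total intersection of convex sets in a \CAT\ space does not by itself manufacture an embedded flat, and no such flat-producing lemma is available at this level of generality.

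Bridson's proof instead inducts on $m=|F|$ (not on $l$) and uses all $l$ commuting conjugates at once. Given $F\subseteq S$ with $|F|=m>k$, one forms the conjugates $F_1,\dots,F_l$ and applies Farb's criterion to $\langle F_1\cup\dots\cup F_l\rangle$; a $(d+1)$-element subset $T$ splits as $T_1\cup\dots\cup T_l$ with $\sum|T_i|\le d+1\le kl$, and the numerical constraint $d<kl$ is exactly what, together with the commuting structure and the inductive hypothesis on strictly smaller pieces, lets one conclude that every such $T$ fixes a point. The dimension hypothesis is spent in this counting step, not in any dimension-drop for fixed-point sets.
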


Now we are ready to prove Theorem A. 

\begin{NewTheoremA}
\label{MainTheorem}
If $n\geq 4$ and $d<\left\lfloor\frac{n}{2}\right\rfloor$, then \Aut\ has property $\F\mathcal{A}_{d}$. 
\end{NewTheoremA}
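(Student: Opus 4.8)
The plan is to fix an isometric action $\Phi\colon{\rm Aut}(W_n)\to{\rm Isom}(X)$ on a complete $d$-dimensional \CAT\ space $X$ with $d<\lfloor n/2\rfloor$ and to produce a point fixed by all of the generating set $Y$ of Proposition \ref{GenAut}; by definition doing this for every such action is property $\F\mathcal{A}_d$, and by Proposition \ref{boundedOrbit} it is in fact enough to exhibit a bounded orbit. Two facts come for free. Every finite subgroup of \Aut\ is elliptic, since its orbits are finite hence bounded (Proposition \ref{boundedOrbit}); in particular $\Sym\cong\langle\alpha_\pi\rangle$ and the type-$B_{n-1}$ Coxeter subgroup of Corollary \ref{epi} fix points. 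Moreover every element of $Y$ is an involution, hence elliptic.

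The engine is Proposition \ref{conjugates} fed with \emph{disjoint-support} commuting conjugates. If $H\le{\rm Aut}(W_n)$ is supported on an index set of size $s$ — that is, generated by $\sigma_{ij}$'s and $\alpha_{(i,j)}$'s with indices in that set — then conjugating a generating set of $H$ by permutations $\alpha_\pi$ that move this index set across $\lfloor n/s\rfloor$ pairwise disjoint translates yields $\lfloor n/s\rfloor$ conjugate subsets that pairwise commute, because partial conjugations and transpositions with disjoint supports commute and $\alpha_\pi\sigma_{ij}\alpha_\pi^{-1}=\sigma_{\pi(i)\pi(j)}$. Taking $H$ supported on a single pair $\{i,j\}$ gives $l=\lfloor n/2\rfloor$ commuting conjugates. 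With $k=1$ each generator is elliptic, and since $d<\lfloor n/2\rfloor=k\cdot l$, Proposition \ref{conjugates} forces the two-index local subgroups $\langle\sigma_{ij},\sigma_{ji},\alpha_{(i,j)}\rangle$ to be elliptic. (Abstractly $\langle\sigma_{ij},\sigma_{ji}\rangle$ is infinite dihedral and need not fix a point on a line; it is the commuting conjugate occupying the remaining coordinates that, together with the dimension bound, pins down a common fixed point.)

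To pass from local to global I would use the semidirect decomposition ${\rm Aut}(W_n)=\Sigma\rtimes\Sym$, where $\Sigma=\langle\sigma_{ij}\mid i\neq j\rangle$ is the normal subgroup generated by the partial conjugations (it is normalized by $\Sym$ via the relation above). This reduces the theorem to the single assertion that $\Sigma$ is elliptic: then $\Sym$ acts on the nonempty closed convex set ${\rm Fix}(\Sigma)$ and, being finite, fixes a point $q\in{\rm Fix}(\Sigma)$ by Proposition \ref{boundedOrbit}, and this $q$ is fixed by all of $\langle\Sigma,\Sym\rangle={\rm Aut}(W_n)$.

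The hard part is showing that $\Sigma$ itself is elliptic, and this is where the hypothesis $d<\lfloor n/2\rfloor$ must genuinely be spent. The two-index local subgroups generate $\Sigma$, but one cannot simply invoke Corollary \ref{comm} or run Farb's Fixed Point Criterion on the generating set $\{\sigma_{ij}\}$: local subgroups whose index pairs overlap do \emph{not} commute (for instance $\sigma_{ij}$ and $\sigma_{jk}$ generate an infinite dihedral group), and a $(d+1)$-element subset of $\{\sigma_{ij}\}$ need not be elliptic, so Farb's criterion fails for this set. The intended route is to apply Proposition \ref{conjugates} a second time at the level of the local subgroups, organizing the partial conjugations into disjoint-support commuting families and exploiting the $\lfloor n/2\rfloor$ available disjoint copies to keep the product $k\cdot l$ above $d$ at every stage. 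Controlling the supports of the subsets that arise so that enough commuting conjugates survive is the main obstacle, and the bookkeeping here is precisely what pins the threshold at $\lfloor n/2\rfloor$.
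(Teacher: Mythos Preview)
Your proposal has a genuine gap: you explicitly stop at ``the hard part is showing that $\Sigma$ itself is elliptic'' and offer only a vague intention to ``apply Proposition \ref{conjugates} a second time'' together with unspecified ``bookkeeping''. Worse, the reduction to $\Sigma$ buys nothing: since $\Sigma$ has finite index in \Aut, ellipticity of $\Sigma$ on a complete \CAT\ space is \emph{equivalent} to ellipticity of \Aut\ (the finite quotient acts on the nonempty convex set ${\rm Fix}(\Sigma)$, and conversely a global fixed point is fixed by $\Sigma$). So you have restated the theorem, not reduced it, and the generating set $\{\sigma_{ij}:i\neq j\}$ you propose for $\Sigma$ has $n(n-1)$ elements with overlapping supports and no linear structure to organize an induction.

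The paper's argument avoids $\Sigma$ entirely. It works with the small generating set $Y=\{\sigma_{12},\alpha_{(1,2)},\ldots,\alpha_{(n-1,n)}\}$ containing a \emph{single} partial conjugation, and proves by induction on $k$ that every $k$-element subset of $Y$ is elliptic, up to $k=d+1$; Farb's criterion then finishes. The induction step is a case analysis on the Coxeter diagram of $\langle Y'\rangle$ inside the Coxeter group $G$ of Corollary \ref{epi}: if $\sigma_{12}\notin Y'$ then $\langle Y'\rangle\subseteq\Sym$ is finite; if the diagram is disconnected then Corollary \ref{comm} applies; the only connected case containing the infinite edge is $Y'=\{\sigma_{12},\alpha_{(1,2)},\ldots,\alpha_{(k,k+1)}\}$, supported on $\{1,\ldots,k+1\}$, which admits $\lfloor n/(k+1)\rfloor$ pairwise-commuting disjoint conjugates. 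Proposition \ref{conjugates} with $l=\lfloor n/(k+1)\rfloor$ and the inductive hypothesis on $k$-subsets then gives ellipticity provided $d<k\lfloor n/(k+1)\rfloor$, and one checks $k\lfloor n/(k+1)\rfloor\ge\lfloor n/2\rfloor$ for $1\le k\le d$. (The remaining connected case $\{\sigma_{12},\alpha_{(2,3)},\ldots\}$ is a finite type-$B$ Coxeter group.) The crucial idea you are missing is the choice of $Y$: having only one $\sigma$ forces every ``hard'' subset to be a single interval of controlled support, which is exactly what makes the conjugate count $k\lfloor n/(k+1)\rfloor$ beat $d$ at every stage.
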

\begin{proof}
Let $X$ be a $d$-dimensional complete \CAT\ space with $d<\left\lfloor\frac{n}{2}\right\rfloor$ and 
\[
 \Phi:{\rm Aut}(W_n)\rightarrow{\rm Isom}(X)
\]
an action of \Aut\ on $X$. By Proposition \ref{GenAut} the group \Aut\ is generated by the set 
\[
Y:=\left\{ \alpha_{(i,i+1)}, \sigma_{12}\mid i=1,\ldots, n-1\right\}.
\]
By Corollary \ref{epi} we have an epimorphism
$f: G\twoheadrightarrow{\rm Aut}(W_n)$. Now we consider the following homomorphism
\[
\Phi\circ f: G\twoheadrightarrow{\rm Aut}(W_n)\rightarrow{\rm Isom}(X).
\] 

It is obvious that if a subgroup of $G$ has a fixed point in $X$, then the image of this subgroup under $f$, a subgroup in \Aut, also has a fixed point.

We know by Proposition \ref{boundedOrbit} that each $1$-element subset of $Y$ has a fixed point. Now we assume that each $k$-element subset with $k\leq d$ of $Y$ has a fixed point. Let $Y'$ be a $(k+1)$-element subset of $Y$.

If $\sigma_{12}$ is not in $Y'$, then $\langle Y'\rangle$ is a finite subgroup of \Aut\ and this subgroup has by Proposition \ref{boundedOrbit} a fixed point. 

If $\sigma_{12}$ is in $Y'$, we consider the corresponding Coxeter diagram of $\langle Y'\rangle\subseteq G$. If it is not connected, then it follows from the hypothesis and from Corollary \ref{comm} that $\langle Y'\rangle $ has a fixed point. If the Coxeter diagram of $\langle Y'\rangle\subseteq G$ is connected, then we have the following cases:
\begin{enumerate}
 \item $Y'=\left\{\sigma_{12},\alpha_{(1,2)}, \alpha_{(2,3)}, \alpha_{(3,4)},\ldots,\alpha_{(k,k+1)}\right\}$,
 \item $Y'=\left\{\sigma_{12}, \alpha_{(2,3)}, \alpha_{(3,4)}, \alpha_{(5,6)},\ldots,\alpha_{(k+1,k+2)}\right\}$.
\end{enumerate}

If $Y'$ is equal to $\left\{\sigma_{12},\alpha_{(1,2)}, \alpha_{(2,3)}, \alpha_{(3,4)},\ldots,\alpha_{(k,k+1)}\right\}$, then  we define the permutations 
\[
 \tau_{i}:=(1, (k+1)\cdot(i-1)+1)(2, (k+1)\cdot(i-1)+2)\ldots(k+1,(k+1)\cdot(i-1)+k+1)
\]
and the sets 
\[
 S_{i}:=\alpha_{\tau_{i}}Y'\alpha^{-1}_{\tau_{i}}
\]
for $i\in\left\{1,\ldots,\left\lfloor\frac{n}{k+1}\right\rfloor\right\}$. The sets $S_{1},\ldots, S_{\left\lfloor\frac{n}{k+1}\right\rfloor}$ have the property that $[S_{i},S_{j}]=1$ for~$i\neq j$. By the assumption each $k$-element subset of $Y'$ has a fixed point and it follows from Proposition \ref{conjugates} that for $d<k\left\lfloor\frac{n}{k+1}\right\rfloor$ the set $Y'$ has a fixed point. An easy calculation shows, that $\left\lfloor\frac{n}{2}\right\rfloor\leq k\left\lfloor\frac{n}{k+1}\right\rfloor$ for $1\leq k\leq d$.

If $Y'$ is equal to $\left\{\rho_{12}, (x_{2},x_{3}), (x_{3},x_{4}),\ldots,(x_{k+1},x_{k+2})\right\}$, then $\langle Y'\rangle$ is finite (Coxeter group of type $B_{k+1}$).

By Farb's Fixed Point Criterion it follows that \Aut\ has a global fixed point.
\end{proof}

\begin{NewCorollaryB}
The group \Aut\ has Serre's property \FA\ iff $n=1$ or $n\geq 4$. In particular, \Aut\ is isomorphic to a non-trivial amalgam iff $n=2, 3$.
\end{NewCorollaryB}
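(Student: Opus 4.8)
The plan is to deduce both biconditionals from Theorem A, using the elementary observation that a simplicial tree, with its path metric, is a complete $1$-dimensional \CAT\ space on which every simplicial action without inversions is isometric with the same fixed-point set; consequently property $\F\mathcal{A}_1$ implies Serre's property \FA.

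First I would prove the ``if'' direction of the first biconditional. For $n=1$ the group ${\rm Aut}(W_1)$ is trivial and has property \FA\ vacuously. For $n\geq 4$ we have $1<\left\lfloor\frac{n}{2}\right\rfloor$, so Theorem A applied with $d=1$ shows that ${\rm Aut}(W_n)$ has property $\F\mathcal{A}_1$, hence \FA. Next I would treat the ``only if'' direction by exhibiting fixed-point-free tree actions for $n=2,3$. For $n=2$ we have ${\rm Aut}(W_2)\cong W_2=\Z_2*\Z_2$, a nontrivial free product, which acts on its Bass--Serre tree (the line, as the infinite dihedral group) without a global fixed point and therefore fails \FA. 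For $n=3$, Lemma \ref{Iso} gives ${\rm Aut}(W_3)\cong{\rm Aut}(F_2)$, which surjects onto ${\rm Out}(F_2)\cong{\rm GL}_2(\Z)$. Since ${\rm GL}_2(\Z)$ is a nontrivial amalgam, it acts on its Bass--Serre tree $T$ without a global fixed point; composing with the epimorphism ${\rm Aut}(W_3)\twoheadrightarrow{\rm GL}_2(\Z)$ yields an action of ${\rm Aut}(W_3)$ on $T$ whose fixed-point set coincides with that of ${\rm GL}_2(\Z)$ and is thus empty. Hence ${\rm Aut}(W_3)$ fails \FA, completing the first biconditional.

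For the ``in particular'' statement I would argue as follows. Any nontrivial amalgam acts on its Bass--Serre tree without a global fixed point and so fails \FA; combined with the first part this forces $n\in\{2,3\}$ whenever ${\rm Aut}(W_n)$ is a nontrivial amalgam (the case $n=1$ is excluded since the trivial group is not a nontrivial amalgam). For the converse, ${\rm Aut}(W_2)\cong\Z_2*\Z_2$ is already presented as a nontrivial amalgam, so it remains to handle ${\rm Aut}(W_3)\cong{\rm Aut}(F_2)$.

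The hard part will be this last case: failing \FA\ alone does not yield an amalgam decomposition, since a priori the group could instead split as an HNN extension, equivalently surject onto $\Z$. I expect this to be the main obstacle. The cleanest resolution is to invoke Serre's structure theorem, by which a finitely generated group that fails \FA\ is either a nontrivial amalgam or admits an epimorphism onto $\Z$, and then to rule out the second alternative by showing that ${\rm Aut}(F_2)$ has finite abelianization. Finite generation is guaranteed by Proposition \ref{GenAut}, and the abelianization can be read off from the extension $1\rightarrow F_2\rightarrow{\rm Aut}(F_2)\rightarrow{\rm GL}_2(\Z)\rightarrow 1$: the coinvariants of the standard ${\rm GL}_2(\Z)$-action on $H_1(F_2)\cong\Z^2$ are finite (already $-I$ forces $2v=0$ for every $v$), and ${\rm GL}_2(\Z)^{\rm ab}$ is finite, so $H_1({\rm Aut}(F_2);\Z)$ is finite and no epimorphism onto $\Z$ exists. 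Therefore ${\rm Aut}(F_2)$ is a nontrivial amalgam. Alternatively, one may simply cite an explicit amalgam decomposition of ${\rm Aut}(F_2)$ from the literature, which bypasses the abelianization computation entirely.
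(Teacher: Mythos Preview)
Your argument is correct, and the first biconditional is handled essentially as in the paper (Theorem~A for $n\geq 4$; the identifications ${\rm Aut}(W_2)\cong W_2$ and ${\rm Aut}(W_3)\cong{\rm Aut}(F_2)$ for the failing cases), the only minor difference being that the paper cites Bogopolski for ${\rm Aut}(F_2)$ failing \FA\ whereas you construct the tree action yourself via the surjection onto ${\rm GL}_2(\Z)$.

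The genuine divergence is in the ``in particular'' clause. You single out $n=3$ as the hard case and show $H_1({\rm Aut}(F_2);\Z)$ is finite via the extension $1\to F_2\to{\rm Aut}(F_2)\to{\rm GL}_2(\Z)\to 1$ and a coinvariant computation; this is correct, but the paper bypasses all of it with a one-line observation available directly from Proposition~\ref{GenAut}: for every $n\geq 2$ the generating set $Y$ consists of involutions, so ${\rm Aut}(W_n)$ is generated by torsion elements and hence admits no epimorphism onto $\Z$. Serre's theorem then yields the equivalence ``nontrivial amalgam $\Leftrightarrow$ not \FA'' uniformly in $n$, with no case analysis and no abelianization computation. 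Your route has the merit of not depending on the particular generating set, but the paper's argument is substantially shorter and uses only ingredients already established in Section~2.
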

\begin{proof}
For $n=1$ we have ${\rm Aut}(W_1)\cong\left\{{\rm id}\right\}$ and hence this group has property \FA. For $n=2$ the group $W_2$ is the infinite dihedral group and by \cite[1.4]{Thomas} ${\rm Aut}(W_2)\cong W_2$ and therefore this group does not have property \FA, see \cite[6.1 Theorem 15]{Serre}. By Lemma \ref{Iso} we have ${\rm Aut}(W_3)\cong{\rm Aut}(F_2)$ and this group does not have property \FA, see \cite{BogopolskiFA}. By Theorem A the group \Aut\ has property \FA\ for $n\geq 4$. 

By Proposition \ref{GenAut} the group \Aut\ is generated by elements of finite order and therefore does not surjects to $\mathbb{Z}$. Thus, \Aut\ is isomorphic to a non-trivial amalgam iff $n=2, 3$ by \cite[\S 6 Theorem 15]{Serre}.
\end{proof}

The next question we want to investigate is the following: {\it do subgroups of finite index in \Aut\ have property \FA?}

Let $G$ be any group. Then the group of its automorphisms acts on the conjugacy classes of involutions of $G$. The kernel of this action is called the group of special automorphisms and denoted by ${\rm Spe}(G)$, it contains the group of inner automorphisms, denoted by ${\rm Inn}(G)$. For the universal Coxeter group $W_n$ there are $n$ conjugacy classes of involutions, and we have ${\rm Aut}(W_n)/{\rm Spe}(W_n)\cong {\rm Sym}(n)$. 
\begin{theorem}
For $n\geq 2$, the group \Spe\ doesn't have property \FA. Hence \Aut\ and \Spe\ don't have Kazhdan's property (T).
\end{theorem}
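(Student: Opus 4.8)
The plan is to exhibit a surjection from \Spe\ onto the infinite dihedral group $\Z_2 * \Z_2$. Since $\Z_2 * \Z_2$ is a nontrivial free product, it acts on the Bass--Serre tree of this splitting without a global fixed point and hence fails \FA; as property \FA\ is inherited by quotients, \Spe\ cannot have \FA\ either. I will realise this surjection as a retraction of \Spe\ onto the copy of ${\rm Spe}(W_2)$ that sits inside it.

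First I would reinterpret \Spe\ as the group of \emph{symmetric} automorphisms. The conjugacy classes of involutions of $W_n$ are precisely $[s_1],\ldots,[s_n]$, because every finite-order element of a free product of copies of $\Z_2$ is conjugate into a factor, and these classes are pairwise distinct. Hence an automorphism is special exactly when $\phi(s_i)=w_is_iw_i^{-1}$ for suitable $w_i\in W_n$ and all $i$. In particular every partial conjugation $\sigma_{ij}$ lies in \Spe. For $n=2$ the subgroup ${\rm Spe}(W_2)$ is the index-two kernel of ${\rm Aut}(W_2)\to{\rm Sym}(2)$ and is therefore generated by $\sigma_{12}$ together with $\sigma_{21}=\alpha_{(1,2)}\sigma_{12}\alpha_{(1,2)}^{-1}$; since $\sigma_{12}\sigma_{21}$ already acts with infinite order on the free factor $\langle s_1,s_2\rangle$, we get ${\rm Spe}(W_2)\cong\Z_2 * \Z_2$.

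The heart of the proof is the construction of the retraction. Let $\rho\colon W_n\twoheadrightarrow W_2$ be the homomorphism fixing $s_1,s_2$ and killing $s_3,\ldots,s_n$. Given $\phi\in{\rm Spe}(W_n)$ with $\phi(s_i)=w_is_iw_i^{-1}$, I define $\Theta(\phi)=\overline\phi\in{\rm Aut}(W_2)$ by $\overline\phi(s_i):=\rho(w_i)\,s_i\,\rho(w_i)^{-1}$ for $i=1,2$. One first checks that $\overline\phi$ is well defined, using that $w_i$ is determined up to the centralizer $\langle s_i\rangle$ of $s_i$ in $W_n$, on which $\rho$ restricts to the identity, and that $\overline\phi$ is again a symmetric automorphism of $W_2$. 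The key point is the commuting square $\rho\circ\phi=\overline\phi\circ\rho$, which I would verify on each generator $s_j$ by distinguishing the cases $j\in\{1,2\}$ and $j\geq 3$. Granting this identity, the relation $(\phi\psi)(s_i)=\phi(v_i)\,w_is_iw_i^{-1}\,\phi(v_i)^{-1}$ (where $\psi(s_i)=v_is_iv_i^{-1}$) yields $\overline{\phi\psi}=\overline\phi\,\overline\psi$, so $\Theta$ is a homomorphism; and since $\rho$ fixes $W_2$ pointwise, $\Theta$ restricts to the identity on ${\rm Spe}(W_2)$. Thus $\Theta$ is a retraction of \Spe\ onto ${\rm Spe}(W_2)\cong\Z_2 * \Z_2$.

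The conclusions then follow formally. A quotient of a group with \FA\ again has \FA, while $\Z_2 * \Z_2$ does not, so \Spe\ lacks \FA\ for all $n\geq 2$. Since Kazhdan's property (T) implies \FA, the group \Spe\ cannot have (T); and as $[{\rm Aut}(W_n):{\rm Spe}(W_n)]=|{\rm Sym}(n)|<\infty$ and property (T) is inherited by finite-index subgroups, \Aut\ cannot have (T) either, for otherwise \Spe\ would. I expect the main obstacle to be the proof that $\Theta$ is a well-defined group homomorphism, namely the commuting-square identity $\rho\circ\phi=\overline\phi\circ\rho$ together with the independence of the choice of conjugators $w_i$; the remaining steps are either standard or direct consequences.
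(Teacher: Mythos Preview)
Your proposal is correct and follows essentially the same route as the paper: project $W_n\twoheadrightarrow W_2$, induce a surjection ${\rm Spe}(W_n)\twoheadrightarrow{\rm Spe}(W_2)\cong\Z_2*\Z_2$, deduce failure of \FA, and then invoke Watatani together with the finite index of \Spe\ in \Aut. The only difference is packaging: the paper dispatches your ``main obstacle'' in one line by observing that $\ker\rho$ (the normal closure of $s_3,\ldots,s_n$) is characteristic under ${\rm Spe}(W_n)$, so the induced map on the quotient is automatic; your explicit conjugator formula and commuting-square verification amount to unwinding that same observation.
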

\begin{proof}
Let $\pi: W_n\twoheadrightarrow W_2$ be the projection by sending $s_1\mapsto s_1, s_2\mapsto s_2$ and $s_i\mapsto 1$ for $i\geq 3$. The kernel of $\pi$ is characteristic under ${\rm Spe}(W_n)$. Therefore we obtain the following map 
$\Psi:{\rm Spe}(W_n)\rightarrow{\rm Spe}(W_2)$ which is surjective. Since  ${\rm Spe}(W_2)=\langle \sigma_{12}, \sigma_{21}\rangle\cong W_2$, see \cite{Muehlherr2}, this group doesn't have property \FA, therefore \Spe\ also doesn't have property \FA. 
By the result of Watatani \cite{Watatani} follows that \Spe\ doesn't have property (T). Since property (T) descends to finite index subgroups it follows that \Aut\ doesn't have property (T) either.
\end{proof}

If a group does not have property (T) it is natural to ask if this group is amenable. Concerning the group \Aut\ we prove:
\begin{proposition}
The group \Aut\ is amenable iff $n=1,2$.
\end{proposition}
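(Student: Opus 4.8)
The plan is to prove both directions of the equivalence, so I would split the argument according to the value of $n$.

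\medskip\noindent\textbf{The forward/easy cases $n=1,2$.} For $n=1$ we have ${\rm Aut}(W_1)\cong\{{\rm id}\}$, which is trivially amenable. For $n=2$ we have ${\rm Aut}(W_2)\cong W_2$, the infinite dihedral group. This group is virtually cyclic (it contains $\Z$ as an index-$2$ subgroup), hence solvable, hence amenable. So these two cases follow immediately from standard closure properties of the class of amenable groups, and I would dispatch them in one or two sentences.

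\medskip\noindent\textbf{The case $n\geq 3$: non-amenability.} The strategy here is to exhibit a non-abelian free subgroup, since a group containing a copy of $F_2$ cannot be amenable. The cleanest route is to invoke the embedding already recorded in the excerpt: for $n\geq 3$ the map $\iota:{\rm Aut}(W_n)\hookrightarrow{\rm Aut}(F_{n-1})$ is injective. For $n=3$ we even have the isomorphism ${\rm Aut}(W_3)\cong{\rm Aut}(F_2)$ from Lemma \ref{Iso}, and ${\rm Aut}(F_2)$ is well known to contain free subgroups (it surjects onto ${\rm GL}_2(\Z)$, which contains $F_2$), so it is non-amenable. For general $n\geq 3$ it suffices to produce a single free subgroup of rank $2$ inside ${\rm Aut}(W_n)$. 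A concrete and self-contained way is to use the partial conjugations: the subgroup ${\rm Spe}(W_2)=\langle\sigma_{12},\sigma_{21}\rangle\cong W_2$ appearing in the previous theorem already contains a free abelian group of infinite index, but to get $F_2$ directly I would instead work with the partial conjugations $\sigma_{ij}$ and show, via their explicit action on $\ker(\epsilon)\cong F_{n-1}$, that two suitable such automorphisms generate a non-abelian free group by a ping-pong argument.

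\medskip\noindent\textbf{Main obstacle and preferred shortcut.} The main obstacle is making the non-amenability for general $n\geq 3$ genuinely rigorous rather than hand-waving: verifying that a chosen pair of generators satisfies a ping-pong configuration requires tracking their action on reduced words in $F_{n-1}$, which is a nontrivial calculation. To avoid grinding through this, the cleaner plan is to exploit the surjection ${\rm Aut}(F_{n-1})\twoheadrightarrow{\rm GL}_{n-1}(\Z)$ together with the embedding $\iota$; however, one must check the composite image is large enough, which is not automatic since $\iota$ need not hit the full group. The safest and shortest argument is therefore: for $n\geq 4$ apply Theorem D (or the embedding structure) to locate ${\rm Sym}(n)$-related non-amenable behaviour, but most economically I would reduce to $n=3$ by using a retraction. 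Concretely, the projection $W_n\twoheadrightarrow W_3$ (collapsing $s_i\mapsto 1$ for $i\geq 4$) induces, as in the proof of the previous theorem, a surjection ${\rm Spe}(W_n)\twoheadrightarrow{\rm Spe}(W_3)$; since ${\rm Aut}(W_3)\cong{\rm Aut}(F_2)$ contains $F_2$, amenability of ${\rm Aut}(W_n)$ would force amenability of this quotient, a contradiction. Thus no group in the chain for $n\geq 3$ is amenable, completing the equivalence. The delicate point to get right is that amenability passes to quotients and to subgroups, so each reduction step is valid, and that ${\rm Aut}(F_2)$ really is non-amenable, which I would cite from its surjection onto ${\rm GL}_2(\Z)\supseteq F_2$.
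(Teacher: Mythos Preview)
Your treatment of $n=1,2$ matches the paper's. For $n\geq 3$ your preferred route---retracting to ${\rm Spe}(W_3)$ via the projection $W_n\twoheadrightarrow W_3$ and then invoking non-amenability of ${\rm Aut}(W_3)\cong{\rm Aut}(F_2)$---is genuinely different from what the paper does. The paper works directly inside ${\rm Aut}(W_n)$ for every $n\geq 3$: using the embedding $\iota:{\rm Aut}(W_n)\hookrightarrow{\rm Aut}(F_{n-1})$, it computes that the two products
\[
\sigma_{12}\sigma_{13}\cdots\sigma_{1n}\,\sigma_{21}\sigma_{23}\cdots\sigma_{2n}
\quad\text{and}\quad
\sigma_{21}\sigma_{23}\cdots\sigma_{2n}\,\sigma_{31}\sigma_{32}\cdots\sigma_{3n}
\]
map under $\iota$ to the inner automorphisms $g_{x_1},g_{x_2}\in{\rm Inn}(F_{n-1})$, which freely generate $F_2$ since $x_1,x_2$ are part of a free basis and $F_{n-1}$ is centerless. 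This is precisely the ``explicit action on $\ker(\epsilon)$'' approach you mentioned and set aside as too laborious; the paper avoids any ping-pong by landing on inner automorphisms by basis elements, making the free subgroup immediate. What the paper's approach buys is an explicit, uniform copy of $F_2$ in every ${\rm Aut}(W_n)$ with no quotient bookkeeping; what yours buys is a clean reduction to a single known case.

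One wrinkle in your argument to tighten: your surjection lands in ${\rm Spe}(W_3)$, but the non-amenability you cite is for ${\rm Aut}(W_3)$. ``Amenability passes to subgroups'' goes the wrong way here. You need one extra sentence, e.g.\ that ${\rm Spe}(W_3)$ has finite index in ${\rm Aut}(W_3)$ so that amenability of ${\rm Spe}(W_3)$ would force amenability of the extension ${\rm Aut}(W_3)$, or more directly that ${\rm Inn}(W_3)\cong W_3\subseteq{\rm Spe}(W_3)$ already contains a non-abelian free group. With that line added, your argument is complete.
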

\begin{proof}
For $n=1, 2$ we have: ${\rm Aut}(W_1)=\left\{{\rm id}\right\}$, ${\rm Aut}(W_2)\cong W_2\cong\Z\rtimes\Z_2$. By \cite[G.2.1]{Bekka} $\Z$  and $\Z_2$ are amenable and by \cite[G.3.6]{Bekka} follows that $\Z\rtimes\Z_2$ is amenable.

For $n\geq 3$ let us consider the monomorphism $\iota:{\rm Aut}(W_n)\rightarrow{\rm Aut}(F_{n-1})$ again.
We denote by $g_{x_i}\in{\rm Inn}(F_{n-1})$ the conjugation with $x_i$ for $i=1, \ldots, n-1$. 
We have:
\[
\iota(\sigma_{12}\sigma_{13}\ldots\sigma_{1n}\sigma_{21}\sigma_{23}\ldots\sigma_{2n})=g_{x_1}
\]
and
\[
\iota(\sigma_{21}\sigma_{23}\ldots\sigma_{2n}\sigma_{31}\sigma_{32}\ldots\sigma_{3n})=g_{x_2}
\] 
We obtain:
\[
\langle \sigma_{12}\sigma_{13}\ldots\sigma_{1n}\sigma_{21}\sigma_{23}\ldots\sigma_{2n}, \ \sigma_{21}\sigma_{23}\ldots\sigma_{2n}\sigma_{31}\sigma_{32}\ldots\sigma_{3n}\rangle\cong\langle g_{x_1}, g_{x_2}\rangle \cong F_2.
\]
Hence $F_{2}$ is a subgroup of \Aut. By \cite[G.3.5]{Bekka} follows that \Aut\ is not amenable.
\end{proof}

\section{Proof of Theorem D}

\begin{proof}
We denote by $\Sigma_n=\left\{\alpha_{\pi}\mid \pi\in{\rm Sym}(n)\right\}\subseteq{\rm Aut}(W_n)$ and by $A_n=\left\{\alpha_\pi\mid \pi\in{\rm Alt}(n)\right\}\subseteq \Sigma_n$. We have $\Sigma_n\cong{\rm Sym}(n)$ and $A_n\cong {\rm Alt}(n)$. Let $K$ be the kernel of $\phi_{|\Sigma_n}$. Since $n\geq 4$ and $K\neq 1$, we must have $A_n\subseteq K$ or $K=\left\{id, \alpha_{(12)(34)}, \alpha_{(13)(24)}, \alpha_{(14)(23)}\right\}$.
If $A_n\subseteq K$, then we consider the following generating set of \Aut:
\[
\left\{\alpha_\pi, \alpha_{(1,2)}, \sigma_{34}\mid \pi\in {\rm Alt}(n) \right\}.
\]
Hence the image of $\phi$ is generated by
\[
\left\{\phi(\alpha_\pi), \phi(\alpha_{(1,2)}), \phi(\sigma_{34})\mid \pi\in{\rm Alt}(n)\right\}.
\] 
Since $A_n\subseteq K$, we have
\[
\phi({\rm Aut}(W_n))=\langle \phi(\alpha_{(1,2)}), \phi(\sigma_{34})\rangle.
\]
The elements $\alpha_{(1,2)}$ and $\sigma_{34}$ are commuting involutions, therefore $\phi({\rm Aut}(W_n))\subseteq\Z_2\times\Z_2$.

If $K=\left\{id, \alpha_{(12)(34)}, \alpha_{(13)(24)}, \alpha_{(14)(23)}\right\}$, then $n=4$ and we consider the following generating set of ${\rm Aut}(W_4)$:
\[
\left\{K, \alpha_{(2,3)}, \alpha_{(3,4)}, \sigma_{12}\right\}.
\]
The image of $\phi$ is generated by
\[
\left\{\phi(\alpha_{(2,3)}), \phi(\alpha_{(3,4)}), \phi(\sigma_{12})\right\}.
\]
By Corollary \ref{epi} the subgroup of ${\rm Aut}(W_4)$ generated by 
$\left\{\alpha_{(2,3)}, \alpha_{(3,4)}, \sigma_{12}\right\}$ is finite. Hence the image of $\phi$ is also finite. 
\end{proof}

\end{document}